\newcolumntype{M}[1]{>{\centering\arraybackslash}m{#1}}
\newtheorem{theorem}{Theorem}[section]
\newtheorem{lemma}[theorem]{Lemma}
\newcommand{\R}{\mathbb{R}}
\title{Interior-point Methods Strike Back: Solving the Wasserstein Barycenter Problem}
\author{
  Dongdong Ge\\
  Research Institute for Interdisciplinary Sciences\\
  Shanghai University of Finance and Economics\\
  \texttt{ge.dongdong@mail.shufe.edu.cn}\\
  \And
  Haoyue Wang\thanks{{Haoyue Wang and Zikai Xiong are corresponding authors.}}
  \\
  School of Mathematical Sciences\\
  Fudan University\\
  \texttt{haoyuewang14@fudan.edu.cn}  \\
  \And
  Zikai Xiong\footnotemark[1]
  \\
  School of Mathematical Sciences\\
  Fudan University\\
  \texttt{zkxiong16@fudan.edu.cn}  \\
  \And
  Yinyu Ye\\
  Department of Management Science and Engineering\\
  Stanford University\\
  \texttt{yyye@stanford.edu}
}
\begin{document}

\maketitle

\begin{abstract}
Computing the Wasserstein barycenter of a set of probability measures under the optimal transport metric can quickly become prohibitive for traditional second-order algorithms, such as interior-point methods, as the support size of the measures increases. In this paper, we overcome the difficulty by developing a new adapted interior-point method that fully exploits the problem's special matrix structure to reduce the iteration complexity and speed up the Newton procedure. Different from regularization approaches, our method achieves a well-balanced tradeoff between accuracy and speed. A numerical comparison on various distributions with existing algorithms exhibits the computational advantages of our approach. Moreover, we demonstrate the practicality of our algorithm on image benchmark problems including MNIST and Fashion-MNIST.

\end{abstract}

\section{Introduction}

To compare, summarize, and combine probability measures defined on a space is a fundamental task in statistics and machine learning.
Given support points of probability measures in a metric space and a transportation cost function (e.g. the Euclidean distance), Wasserstein distance defines a distance between two measures as the minimal transportation cost between them. This notion of distance leads to a host of important applications, including text classification \cite{word embedding},
clustering \cite{Cluster,Ho-Cluster2,soft clustering,persistence diagrams},  unsupervised learning \cite{GAN,domain adaption}, semi-supervised learning \cite{semi-supervised}, supervised-learning\cite{supervised word mover,learning with wa loss}, statistics \cite{Bayes1, Bayes2,Bayes3,hp testing}, and others \cite{Ads,kernel,robust,dic learning,point embedding}.
Given a set of measures in the same space, the 2-Wasserstein barycenter is defined as the measure minimizing the sum of squared 2-Wasserstein distances to all measures in the set. For example, if a set of images (with common structure but varying noise) are modeled as probability measures, then the Wasserstein barycenter is a mixture of the images that share this common structure. The Wasserstein barycenter better captures the underlying geometric structure than the barycenter defined by the Euclidean or other distances. As a result, the Wasserstein barycenter has applications in clustering \cite{Cluster,Ho-Cluster2,soft clustering}, image retrieval \cite{Cuturi-2013-OT} and others \cite{minimax learning,texture,matching,gaussian process}.


From the computation point of view, finding the barycenter of a set of discrete measures can be formulated by linear programming\cite{LP-formulation,LPbook}. Nonetheless, state-of-the-art linear programming solvers do not scale with the immense amount of data involved in barycenter calculations. Current research on computation mainly follows two types of methods. The first type attempts to solve the linear program (or some equivalent problem) with scalable first-order methods. J.Ye et al. \cite{BADMM} use modified Bregman ADMM(BADMM) -- introduced by \cite{nips-BADMM} -- to compute Wasserestein barycenters for clustering problems. L.Yang et al. \cite{sGS-ADMM} adopt symmetric Gauss-Seidel ADMM to solve the dual linear program, which reduces the computational cost in each iteration. S.Claici et al. \cite{Stochastic WB} introduce a stochastic alternating algorithm that can handle continuous input measures. 
However, these methods are still computationally inefficient when the number of  support points of the input measures and the number of input measures are large. Due to the nature of the first-order methods, these algorithms often converge too slowly to reach high-accuracy solutions.

The second, more mainstream, approach introduces an entropy regularization term to the linear programming formulation\cite{Cuturi-2013-OT,IBP}. This technique was first developed in solving optimal transportation problem. See \cite{Cuturi-2013-OT, nips2017-near linear, AGD is better, Optimal Transport book,Lin-2019,optimal complexity,nips2016-stochastic} for the related works. M. Staib et al. \cite{Parallel} discuss the parallel computation issue and introduce a sampling method. P.Dvurechenskii et al. \cite{Decentralize} study decentralized and distributed computation for the regularized problem. These methods are indeed suitable for large-scale problems due to their low computational cost and parsimonious memory usage. However, this advantage is obtained at the expense of the solution accuracy: especially when the regularization term is weighted less in order to approximate the original problem more accurately, computational efficiency degenerates and the outputs become unstable \cite{IBP}. S.  Amari et al. \cite{sharpen technique} propose a entropic regularization based sharpening technique but their result is not the accurate real barycenter. P.C. Alvarez-Esteban et al. \cite{fixed point method} prove that the barycenter must be the fixed-point of a new operator. See \cite{computation OT book} for a detailed survey of related algorithms.

In this paper, we develop a new interior-point method (IPM), namely Matrix-based Adaptive Alternating interior-point Method (MAAIPM), to efficiently calculate the Wasserstein barycenters. If the support is pre-specified, we apply one step of the Mizuno-Todd-Ye predictor-corrector IPM\cite{Mizuno-Todd-Ye}. The algorithm gains a quadratic convergence rate showed by Y. Ye et al. \cite{Ye1993}, which is a distinct advantage of IPMs over first-order methods. In practice, we implement Mehrotra's predictor-corrector IPM \cite{pdipm}, and add clever heuristics in choosing step lengths and centering parameters.  If the support is also to be optimized, MAAIPM alternatively updates support and linear program variables in an adaptive strategy. At the beginning, MAAIPM updates support points $X^*$ by an unconstrained quadratic program after a few number of IPM iterations. At the end, MAAIPM updates $X^*$ after every IPM iteration and applies the "jump" tricks to escape local minima. Under the framework of MAAIPM, we present two block matrix-based accelerated algorithms to quickly solve the Newton equations at each iteration.
Despite a prevailing belief that IPMs are inefficient for large-scale cases, we show that such an inefficiency can be overcome through careful manipulation of the block-data structure of the normal equation. As a result, our stylized IPM has the following advantages.

\begin{wrapfigure}[21]{r}{0.25\linewidth}
\vspace{-5pt}
\includegraphics[width=\linewidth]{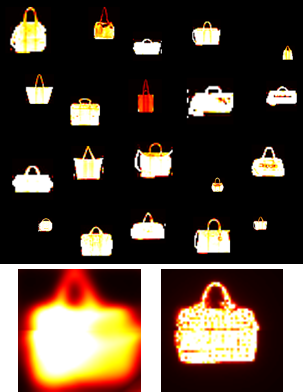}
\caption{A comparison of algorithms for computing the barycenters between a Sinkhorn based approach\cite{IBP}(left) and MAAIPM(right). Samples of handbag(first 4 rows) are from Fashion-MNIST dataset.}
\label{fig:comparison}
\end{wrapfigure}

\textbf{Low theoretical complexity.}
The linear programming formulation of the Wasserstein barycenter has $m\sum_{i=1}^N m_i +m$ variables and{red}$Nm + \sum_{i=1}^N m_i +1$ constraints, where the integers $N$, $m$ and $m_i$ will be specified later. Although MAAIPM is still a second-order method, in our two block matrix-based accelerated algorithms, every iteration of solving the Newton direction has a time complexity of merely $O(m^2 \sum_{i=1}^N m_i + Nm^3)$ or $O( m \sum_{i=1}^N m_i^2 + \sum_{i=1}^N m_i^3)$, where a standard IPM would need
$O\big((Nm + \sum_{i=1}^N m_i +1)^2(m\sum_{i=1}^N m_i +m)\big)$. For simplicity, let $m_i = m, ~i= 1,2\dots,N$, then the time complexity of our algorithm in each iteration is $O(Nm^3)$, instead of standard IPM's complexity $O(N^3m^4)$.
Note that theoretically, when $Nm^2 = (Nm)^k$ for some $1<k<2$, the complexity of the standard IPM can be reduced to $O((Nm)^{\omega(k)}) + O((Nm)^3)$ via fast matrix computation methods, where the specific value of $\omega(k)$ can be found in table 3 of \cite{Fast-matrix-multiplication})

\textbf{Practical effectiveness in speed and accuracy.}
    Compared to regularized methods, IPMs gain high-accuracy solutions and high convergence rate by nature. { Numerical experiments show that our algorithm converges to highly accurate solutions of the original linear program with the least number of iterations. } Figure \ref{fig:comparison} shows the advantages of our methods in accuracy in comparison to the well-developed Sinkhorn-type algorithm \cite{Cuturi-2013-OT,IBP}.




There are more advantages of our approaches in real implementation. When the support points of measures are the same, there are several specially designed highly memory-efficient {and thus very fast} Sinkhorn based algorithms such as \cite{convolutional sinkhorn,IBP}. However, when the support points of measures are different, the convolutional method in \cite{convolutional sinkhorn} is no longer applicable and the memory usage of our method is within a constant multiple of { the popular memory-efficient first-order Sinkhorn method}, IBP\cite{IBP},  much less than the memory used by a commercial solver. {In this case, experiments also show that our algorithm can perform the best in both accuracy and overall runtime.} 
Our algorithms also inherits a natural structure potentially fitting parallel computing scheme well. Those merits ensure that our algorithm is highly suitable for large-scale computation of Wasserstein barycenters.

The rest of the paper is organized as follows. In section 2, we briefly define the Wasserstein barycenter. In section 3, we present its linear programming formulation and introduce the IPM framework. In section 4, we present an IPM implementation that greatly reduces the computational cost of classical IPMs. In section 5, we present our numerical results.

\section{Background and Preliminaries}

In this section, we briefly recall the Wasserstein distance and the Wasserstein barycenter for a set of discrete probability measures \cite{Agueh-2011,Cuturi-2014-OTbarycenter}.
Let $\Sigma_n = \{ \boldsymbol{a} \in \R^n | \sum_{i=1}^{n}a_i=1, a_i \ge 0 \text{ for }  i =1, 2, \dots  n\}$ be the probability simplex in $\R^n$.
For two vectors $\boldsymbol{s}^{(1)} \in \Sigma_{n_1} ,\boldsymbol{s}^{(2)} \in \Sigma_{n_2} $, define the set of matrices $ \mathcal{M}( \boldsymbol{s}^{(1)}, \boldsymbol{s}^{(2)} ) = \{ \Pi \in \R_+^{n_1 \times n_2} :  \Pi\boldsymbol{1}_{n_2} = \boldsymbol{s}^{(1)}  , \Pi^{\top}\boldsymbol{1}_{n_1} = \boldsymbol{s}^{(2)} \} $. Let $\mathcal{P}=\{ (a_i,\boldsymbol{q}_i): i=1,\dots,m \}$ denote the discrete probability measure supported on $m$ points $\boldsymbol{q}_1,\dots , \boldsymbol{q}_m$ in $\mathbb{R}^d$ with weights $a_1,\dots,a_m $ respectively. The Wasserstein barycenter of the two measures $\mathcal{U}=\{(a_i,\boldsymbol{q}_i): i=1,\dots, m_1\}$ and $\mathcal{V}=\{(b_j,\boldsymbol{p}_j): j=1,\dots, m_2\}$ is
\begin{equation}\label{def of W-dist}
  \mathcal{W}_2( \mathcal{U}, \mathcal{V} ) : = \min \left\{ \sqrt{  \sum_{i=1}^{m_1} \sum_{j=1}^{m_2} \pi_{ij}\| \boldsymbol{q}_i - \boldsymbol{p}_j\|^2  } : \Pi = [\pi_{ij}] \in \mathcal{M}( \boldsymbol{a},\boldsymbol{b} ) \right\}
\end{equation}
where $ \boldsymbol{a} = (a_1,\dots,a_{m_1})^{\top} $ and $ \boldsymbol{b} = (b_1,\dots,b_{m_2})^{\top} $.
Consider a set of probability measures $\{ \mathcal{P}^{(t)}, t=1,\cdots, N \}$ where $\mathcal{P}^{(t)}= \{ (a_i^{(t)}, \boldsymbol{q}_i^{(t)}) : i=1,\dots, m_t \} $, and let $\boldsymbol{a}^{(t)} = (a_1^{(t)}, \dots,a_{m_t}^{(t)} )^{\top} $. The Wasserstein barycenter (with $m$ support points) $\mathcal{P}= \{ (w_i, \boldsymbol{x}_i): i=1,\cdots, m \}$ is another probability measure which is defined as a solution of the problem
\begin{equation}\label{def of W-bartcenter}
  \min\limits_{\mathcal{P}} \frac{1}{N} \sum_{t=1}^N ( \mathcal{W}_2( \mathcal{P},\mathcal{P}^{(t)} ) )^2.
\end{equation}
Furthermore, define the simplex $ \mathcal{S}= \big\{ (\boldsymbol{w}, \Pi^{(1)}, \dots,  \Pi^{(N)})\in \R^m_+ \times \R^{m\times m_1}_+ \times \cdots \times \R^{m\times m_N}_+:
{ \boldsymbol{1} _ { m } ^ { \top } \boldsymbol { w } = 1 , \boldsymbol { w } \geq 0 }; \
{ \Pi ^ { ( t ) } \boldsymbol{1} _ { m _ { t } } = \boldsymbol { w } , \left( \Pi ^ { ( t ) } \right) ^ { \top } \boldsymbol{1} _ { m } =  \boldsymbol{a} ^ { ( t ) } , \Pi ^ { ( t ) } \geq 0 , \forall t = 1 , \cdots , N }
\big\}. $
For a given set of support points $X = \{\boldsymbol{x}_1, \dots, \boldsymbol{x}_m\}$, define the distance matrices $D^{(t)}(X)= [ \|\boldsymbol{x}_i - \boldsymbol{q}^{(t)}_j\|^2_2 ] \in \R^{m\times m_t}$ for $t=1,\dots,N$.
Then problem \eqref{def of W-bartcenter} is equivalent to
\begin{equation}\label{non-fix}
    \min\limits_{\boldsymbol{w},X,\Pi^{(t)}} \ \
    \sum _ { t = 1 } ^ { N } \left\langle D^{(t)}(X) , \Pi ^ { ( t ) } \right\rangle \ \
    \text{s.t.} \ \ (\boldsymbol{w},\Pi^{(1)},\dots,\Pi^{(N)}) \in \mathcal{S}, \ \boldsymbol{x}_1, \dots, \boldsymbol{x}_m \in \R^n.
\end{equation}
Problem (\ref{non-fix}) is a nonconvex problem, where one needs to find the optimal support points X and the optimal weight vector $\boldsymbol{w}$ of a barycenter simultaneously. However, in many real applications, the support X of a barycenter can be specified empirically from the support points of $\{ \mathcal{P}^{(t)} \}_{t=1}^N$. Indeed, in some cases, all measures in $\{ \mathcal{P}^{(t)} \}_{t=1}^N$ have the same set of support points and hence the barycenter should also take the same set of support points. In view of this, we will also focus on the case when the support X is given. Consequently, problem (\ref{non-fix}) reduces to the following problem:
\begin{equation}\label{fix}
    \min\limits_{\boldsymbol{w},\Pi^{(t)}} \ \
    \sum _ { t = 1 } ^ { N } \left\langle D^{(t)} , \Pi ^ { ( t ) } \right\rangle \ \
    \text{s.t.} \ \ (\boldsymbol{w},\Pi^{(1)},\dots,\Pi^{(N)}) \in \mathcal{S}
\end{equation}
where $D^{(t)}$ denotes $ \mathcal{D}(X,Q^{(t)})$ for simplicity. In the following sections, we refer to problem (\ref{fix}) as the \textit{Pre-specified Support Problem}, and call problem (\ref{non-fix}) the \textit{Free Support Problem}.

\section{General Framework for MAAIPM}\label{fixed support problem}
\paragraph{Linear programming formulation and preconditioning.}
Note that the Pre-specified Support Problem is a linear program. In this subsection, we focus on removing redundant constraints.
First, we vectorize the constraints $ \Pi ^ { ( t ) } \boldsymbol { 1 }_{ m _ { t } } = \boldsymbol { w } $ and $\left( \Pi ^ { ( t ) } \right) ^ { \top } \boldsymbol { 1 } _ { m } = \boldsymbol { a } ^ { ( t ) } $ captured in $\mathcal S$ to become
\begin{equation*}
  ( \boldsymbol { 1 }_{m_t}^{\top} \otimes I_{m} ) vec( \Pi ^ {(t)} ) = \boldsymbol { w } ,\ \  ( I_{m_t}\otimes \boldsymbol { 1 }_{m}^{\top} ) vec( \Pi ^ {(t)} ) = \boldsymbol { a }^{(t)}, \ \ t = 1 , \cdots , N.
\end{equation*}
Thus, problem (\ref{fix}) can be formulated into the standard-form linear program:
\begin{equation}\label{Standard LP}
  \min \ \boldsymbol{c}^{\top}\boldsymbol{x} \ \ \text{s.t.} \ \ A\boldsymbol{x}=\boldsymbol{b}, \boldsymbol{x}\ge 0
\end{equation}
with $ \boldsymbol{x}=(vec(\Pi^{(1)}) ; ... ; vec(\Pi^{(N)}); \boldsymbol{w} ) $ , $b= ( \boldsymbol{a}^{(1)}; \boldsymbol{a}^{(2)}; ... ; \boldsymbol{a}^{(N)}; \boldsymbol{0}_m; ...; \boldsymbol{0}_m; 1 )$,
$\boldsymbol{c} =  (vec(D^{(1)}) ; ... ; vec(D^{(N)}); \boldsymbol{0} ) $
and $ A=
\begin{bmatrix} E_1^{\top} & E_2^{\top} & 0 \\ 0 & E_3^{\top} & \boldsymbol{1}_m \end{bmatrix}^{\top}$,
where $E_1$ is a block diagonal matrix: $E_1=diag(I_{m_1}\otimes \boldsymbol{1}_m^{\top},..., I_{m_N}\otimes \boldsymbol{1}_m^{\top})$; $E_2$ is a block diagonal matrix: $E_2= diag(\boldsymbol{1}_{m_1}^{\top}\otimes I_m, ..., \boldsymbol{1}_{m_N}^{\top}\otimes I_m)$; and $E_3 = -\boldsymbol{1}_N \otimes I_m$.
Let $M:=\sum_{i=1}^N m_i $, $ n_{row} := Nm + \sum_{i=1}^N m_i +1$ and $ n_{col} := m\sum_{i=1}^N m_i +m  $. Then $A \in \R^{n_{row} \times n_{col}}, \boldsymbol{b} \in \R^{n_{row}}$ and $ \boldsymbol{c} \in \R^{n_{col}}$.
We are faced with a standard form linear program with $n_{col}$ variables and $n_{row}$ constraints. In the spacial case where all $m_i = m$, the number of variables is $O(Nm)$, and the number of constraints is $O(Nm^2)$.

For efficient implementations of IPMs for this linear program, we need to remove redundant constraints.

\begin{lemma}\label{precondition-lemma}
Let $\bar A \in \R^{(n_{row}-N)\times n_{col}}$ be obtained from $A$ by removing the $(M+1)$-th, $(M+m+1)$-th, $\cdots$, $(M+(N-1)m+1)$-th  rows of $A$, and  $\bar{\boldsymbol{b}} \in \R^{n_{row}-N}$ be obtained from $\boldsymbol{b}$ by removing the $(M+1)$-th, $(M+m+1)$-th, $\cdots$, $(M+(N-1)m+1)$-th entries of $\boldsymbol{b}$. Then
 1) $\bar A $ has full row rank; \quad
2) $\boldsymbol{x}$ satisfies $A\boldsymbol{x} = \boldsymbol{b}$ if and only if $\boldsymbol{x}$ satisfies $ \bar A\boldsymbol{x} = \bar{\boldsymbol{b}}$.
\end{lemma}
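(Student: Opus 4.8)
The plan is to understand exactly which rows are being removed and why they are redundant. The matrix $A$ has two block-rows of constraints: the "column-sum" constraints $(\Pi^{(t)})^\top \boldsymbol 1_m = \boldsymbol a^{(t)}$ for $t=1,\dots,N$ (these occupy the first $M=\sum_i m_i$ rows, grouped by $t$), the "row-sum" constraints $\Pi^{(t)}\boldsymbol 1_{m_t} = \boldsymbol w$ for $t=1,\dots,N$ (the next $Nm$ rows, grouped by $t$, each block of size $m$), and the single normalization $\boldsymbol 1_m^\top \boldsymbol w = 1$ (the last row). The rows indexed $M+1, M+m+1, \dots, M+(N-1)m+1$ are the \emph{first} row of each of the $N$ row-sum blocks, i.e. for each $t$ we drop the equation $(\Pi^{(t)}\boldsymbol 1_{m_t})_1 = w_1$. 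Actually, wait — I should double-check whether it is the first row of block $t$ for $t=1,\dots,N$ or for $t=1,\dots,N-1$ plus something; the indices $M+1,\dots,M+(N-1)m+1$ are $N$ numbers ($jm+1$ for $j=0,\dots,N-1$), so it is exactly one row from each of the $N$ row-sum blocks.

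For part 2), the key identity is that summing the column-sum constraints within block $t$ gives $\boldsymbol 1_{m_t}^\top (\Pi^{(t)})^\top \boldsymbol 1_m = \boldsymbol 1_{m_t}^\top \boldsymbol a^{(t)} = 1$ (since $\boldsymbol a^{(t)}\in\Sigma_{m_t}$), while summing the row-sum constraints within block $t$ gives $\boldsymbol 1_m^\top \Pi^{(t)}\boldsymbol 1_{m_t} = \boldsymbol 1_m^\top \boldsymbol w$. The left-hand sides are equal, so every row-sum block $t$ implies $\boldsymbol 1_m^\top\boldsymbol w = 1$, which is also the last constraint. More to the point, I want to show: given all of $\bar A\boldsymbol x = \bar{\boldsymbol b}$ (all column-sum equations, all row-sum equations \emph{except} the first in each block, and the normalization), the $N$ dropped equations follow. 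Fix $t$. The retained row-sum equations in block $t$ say $(\Pi^{(t)}\boldsymbol 1_{m_t})_i = w_i$ for $i=2,\dots,m$. Summing these and using $\boldsymbol 1_m^\top\Pi^{(t)}\boldsymbol 1_{m_t} = \boldsymbol 1_{m_t}^\top(\Pi^{(t)})^\top\boldsymbol 1_m = \boldsymbol 1_{m_t}^\top\boldsymbol a^{(t)} = 1$ (from the column-sum equations of block $t$) and $\boldsymbol 1_m^\top\boldsymbol w = 1$ (normalization), we get $(\Pi^{(t)}\boldsymbol 1_{m_t})_1 = 1 - \sum_{i=2}^m w_i = w_1$, which is exactly the dropped equation. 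The converse direction (that $A\boldsymbol x=\boldsymbol b$ implies $\bar A\boldsymbol x = \bar{\boldsymbol b}$) is trivial since $\bar A,\bar{\boldsymbol b}$ are obtained by deleting rows/entries. This establishes 2), and along the way shows $\mathrm{rank}(A) = \mathrm{rank}(\bar A)$ (the rows of $A$ lie in the row space of $\bar A$, and $\bar A$'s rows are a subset of $A$'s), so $\mathrm{rank}(\bar A) = \mathrm{rank}(A)$.

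For part 1), I then need $\mathrm{rank}(\bar A) = n_{row}-N$, i.e. the $n_{row}-N$ retained rows are linearly independent. Equivalently, given the above, it suffices to show $\mathrm{rank}(A) = n_{row}-N$, i.e. the rows of $A$ span a space of dimension exactly $n_{row}-N$, so there are exactly $N$ independent linear dependencies among them. I have already exhibited $N$ dependencies (for each $t$, the sum of block-$t$ column-sum rows minus the sum of block-$t$ row-sum rows, minus/plus the normalization row — giving $0 = 0$). I would argue these are the only ones: suppose a linear combination of the rows of $\bar A$ is zero. Looking at the columns of $A$ corresponding to $\mathrm{vec}(\Pi^{(t)})$, whose entries interact only with block-$t$ column-sum rows and block-$t$ row-sum rows, one can show block by block that the coefficients of the block-$t$ column-sum rows must all be equal to some constant $\alpha_t$ and the coefficients of the retained block-$t$ row-sum rows must all be equal to $-\alpha_t$ (because the $(i,j)$ entry of $\Pi^{(t)}$ appears exactly once in column-sum row $j$ of block $t$ and once in row-sum row $i$ of block $t$); then the $\boldsymbol w$-columns, which touch the row-sum rows and the normalization row, force the normalization coefficient and the $\alpha_t$'s to vanish. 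I expect the main obstacle to be this bookkeeping for part 1): carefully tracking, through the Kronecker-product structure of $E_1, E_2, E_3$, that the deletion of exactly one row per block kills all $N$ dependencies and leaves an independent set — in particular verifying that after deletion no \emph{new} relations survive. A clean way to organize it is: restrict attention to a single block $t$ (the constraints involving $\Pi^{(t)}$ and $\boldsymbol w$), reduce to showing a small matrix of the form $\left[\begin{smallmatrix} I_{m_t}\otimes\boldsymbol 1_m^\top \\ \widehat{(\boldsymbol 1_{m_t}^\top\otimes I_m)}\end{smallmatrix}\right]$ (row-sum block with its first row removed) has full row rank $m_t + m - 1$, and then assemble the blocks through the shared $\boldsymbol w$ variable plus normalization; a direct dimension count or an explicit construction of a nonsingular square submatrix (choosing a basis of columns, e.g. diagonal-type entries of each $\Pi^{(t)}$ together with one $w_i$) finishes it.
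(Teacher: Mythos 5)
Your proposal is correct, but it proceeds differently from the paper's proof. The paper works entirely at the level of explicit elementary transformation matrices: for part 2) it constructs invertible row-operation matrices $L_1,L_2$ (built from the matrices $T_i$, $S_i$) such that $L_2L_1A$ has zeros exactly in the rows to be deleted and is unchanged elsewhere, and for part 1) it applies column operations $R_1,R_2$ that reduce $\bar A$ to a $0$--$1$ matrix with exactly one nonzero per row and at most one per column, hence permutation-equivalent to $[I,\,0]$. You instead argue directly on the constraint equations: for part 2) you recover each dropped equation $(\Pi^{(t)}\boldsymbol 1_{m_t})_1=w_1$ from the retained ones via the mass-balance identity $\boldsymbol 1_m^\top\Pi^{(t)}\boldsymbol 1_{m_t}=\boldsymbol 1_{m_t}^\top\boldsymbol a^{(t)}=1=\boldsymbol 1_m^\top\boldsymbol w$, and for part 1) you show any vanishing linear combination of the rows of $\bar A$ has all coefficients zero by inspecting the columns indexed by the entries of $\Pi^{(t)}$ and of $\boldsymbol w$. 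Your part 2) has a concrete advantage: the lemma asserts an equivalence of the \emph{systems} $A\boldsymbol x=\boldsymbol b$ and $\bar A\boldsymbol x=\bar{\boldsymbol b}$, which requires not only that the dropped rows of $A$ lie in the span of the retained ones but that the same combination reproduces the dropped entries of $\boldsymbol b$; your use of $\boldsymbol 1_{m_t}^\top\boldsymbol a^{(t)}=1$ supplies exactly this, whereas the paper's appendix only tracks the matrix $A$ and leaves the right-hand side implicit. One small correction to your part 1) sketch: in $\bar A$ the row-sum row for $i=1$ of each block is absent, so the entry $\pi^{(t)}_{1j}$ is hit by the column-sum row $j$ of block $t$ alone, which forces each column-sum coefficient $\mu^{(t)}_j$ (your $\alpha_t$) to vanish directly; it is not the $\boldsymbol w$-columns that kill the $\alpha_t$'s (those only give $\rho=0$ from the $w_1$ column and then $\sum_t\alpha_t=0$). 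With that adjustment the independence argument closes cleanly and is, if anything, shorter than the paper's column-reduction route, though the paper's version has the virtue of being fully explicit.
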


The proof of this lemma is available in the appendix. With this lemma, the primal problem and dual problem of problem \ref{Standard LP} can be written as
\begin{equation}\label{Regularized Standard LP}
\text{(Primal)}  \min \ \boldsymbol{c}^{\top}\boldsymbol{x} \ \ \text{s.t. } \ \ \bar{A} \boldsymbol{x}=\bar{\boldsymbol{b}}, \boldsymbol{x}\ge 0.
\quad\text{(Dual)}  \max \ \boldsymbol{\bar b}^{\top}\boldsymbol{p} \ \ \text{s.t. } \ \ \bar A^{\top}\boldsymbol{\lambda}+\boldsymbol{s} =\boldsymbol{c}, \boldsymbol{s}\ge 0.
\end{equation}

\paragraph{Framework of Matrix-based Adaptive Alternating Interior-point Method (MAAIPM).}
When the support points are not pre-specified, we need to solve problem \eqref{non-fix}. As we just saw, When $X$ is fixed, the problem becomes a  linear program. When $(\boldsymbol{w},\{\Pi^{(t)}\})$ are fixed, the problem is a quadratic optimization problem with respect to $X$, and the optimal $X^*$ can be written in closed form as
\begin{equation}\label{optimalX}
  \textstyle\boldsymbol{x}_i^* = \big(\sum_{t=1}^{N}\sum_{j=1}^{m_t}\pi_{ij}^{(t)}\big)^{-1}{\sum_{t=1}^{N}\sum_{j=1}^{m_t}\pi_{ij}^{(t)}\boldsymbol{q}_{j}^{(t)}},\quad i=1,2\ldots,m.
\end{equation}
In anther word, \eqref{non-fix} can be reformulated as
\begin{equation}\label{re-nonfix}
  \min \boldsymbol{c}(\boldsymbol{x})^{\top}\boldsymbol{x} \ \ \text{s.t.} \bar{A}\boldsymbol{x} = \bar{\boldsymbol{b}}, ~\boldsymbol{x}\ge 0.
\end{equation}
Since, as stated above, \eqref{non-fix} is a non-convex problem and so it contains saddle points and local minima. This makes finding a global optimizer difficult. Examples of local minima and saddle points are available in the appendix.
The alternating minimization strategy used in \cite{Cuturi-2014-OTbarycenter,sGS-ADMM,BADMM}
alternates between optimizing $X$ by solving \eqref{optimalX} and optimizing $(\boldsymbol{w},\{\Pi^{(t)}\})$ by solving (\ref{fix}). However, this alternating approach cannot avoid local minima or saddle points. Every iteration may require solving a linear program \eqref{fix}, which is expensive when the problem size is large.

To overcome the drawbacks, we propose Matrix-based Adaptive Alternating IPM (MAAIPM).
If the support is pre-specified, we solve a single linear program by predictor-corrector IPM\cite{pdipm, book, PrimalDualIPM}.
If the support should be optimized, MAAIPM uses an adaptive strategy.
At the beginning, because the primal variables are far from the optimal solution, MAAIPM updates $X^*$ of \eqref{optimalX} after a few number of IPM iterations for $(\boldsymbol{w},\{\Pi^{(t)}\})$. Then, MAAIPM updates $X^*$ after every IPM iteration and applies the "jump" tricks to escape local minima. Although MAAIPM cannot ensure finding a globally optimal solution, it can frequently get a better solution in shorter time. Since at the beginning MAAIPM updates $X^*$ after many IPM iterations, primal dual predictor-corrector IPM  is more efficient. At the end, $X^*$ is updated more often
and each update of $X^*$ changes the linear programming objective function so that dual variables may be infeasible. However, the primal variables always remain feasible so that the primal IPM is more suitable at the end. Moreover, primal IPM is better for applying "jump" tricks or other local-minima-escaping techniques, which has been shown in \cite{Ye-nonconvex}. Details and illustration are available in the appendix.

In predictor-corrector IPM, the main computational cost lies in solving the Newton equations, which can be reformulated as the normal equations
\begin{equation}\label{ADAsystem}
    \bar{A}({D^k})^2\bar{A}^{\top}\Delta \boldsymbol{\lambda}^k = \boldsymbol{f}^k,
\end{equation}
where $D^k$ denotes ${diag}(x_i^{(k)}{}/s_i^{(k)})$ and $\boldsymbol{f}^k$ is in $\mathbb{R}^{n_{row}-N}.$ This linear system of matrix $\bar{A}({D^k})^2\bar{A}^{\top}$ can be efficiently solved by the two methods proposed in the next section.
In the primal IPM, MAAIPM combines following the central path with optimizing the support points, i.e., it contains three parts in one iteration, taking an Newton step in the logarithmic barrier function
\begin{equation}\label{barrier function}
\textstyle  \text{minimize}\quad \boldsymbol{c}^{\top}\boldsymbol{x}-\mu\sum_{i=1}^{n}\ln {x}_i,\quad \text{subject to}\quad \bar{A}\boldsymbol{x} = \boldsymbol{b},
\end{equation}
reducing the penalty $\mu$, and updating the support \eqref{optimalX}.
The Newton direction $\boldsymbol{p}_k$ at the $k^{th}$ iteration is calculated by
\begin{equation}\label{Newton direction}
\textstyle  \boldsymbol{p}^k = \boldsymbol{x}^k+(X^k)^2\Big(\bar{A}^{\top}{\big(\bar{A}(X^k)^2\bar{A}^{\top}\big)}^{-1}\big(\bar{A}(X^k)^2\boldsymbol{c}-\mu \bar{A}X^k\boldsymbol{1}\big)-\boldsymbol{c}\Big)/\mu^k,
\end{equation}
where $X^k = {diag}(x_i^{(k)})$. The main cost of primal IPM lies in solving a linear system of $\bar{A}(X^k)^2\bar{A}^{\top}$, which again can be efficiently solved by the two methods described in the following section.
Further more, we also apply the warm-start technique to smartly choose the starting point of the next IPM after "jump" \cite{Yewarmstart}.
Compared with primal-dual IPMs' warm-start strategies \cite{warm start, imple warm}, our technique saves the searching time, and only requires slightly more memory.
When we suitably set the termination criterion, numerical studies show that MAAIPM outperforms previous algorithms in both speed and accuracy, no matter whether  the support is pre-specified or not.

\section{Efficient Methods for Solving the Normal Equations}\label{Efficient Methods for Solving the Normal Equations}

In this section, we discuss efficient methods for solving normal equations in the format $(\bar A D \bar A^{\top})\boldsymbol{z} = \boldsymbol{f}$, where $D$ is a diagonal matrix with all diagonal entries being positive. Let $\boldsymbol{d} =
{diag}(D)$, and $M_2 = N(m-1)$. First, through simple calculation, we have the following lemma on the structure of matrix $\bar A D \bar A^{\top}$, whose proof is available in the appendix.

\begin{lemma}\label{structure1}
$\bar A D \bar A^T$ can be written in the following format:
\begin{equation*}
\bar A D \bar A^T =
\begin{bmatrix}
B_1  &  B_2  &  \boldsymbol{0} \\
B_2^{\top} & B_3 + B_4 & \boldsymbol{\alpha} \\
\boldsymbol{0} & \boldsymbol{\alpha}^{\top} & c
\end{bmatrix}
\end{equation*}
where $B_1\in \R^{M\times M}$ is a diagonal matrix with positive diagonal entries; $B_2 \in \R^{M\times M_2}$ is a block-diagonal matrix with N blocks (the size of the i-th block is $(m-1)\times m_i$); $B_3 \in \R^{M_2\times M_2}$ is a diagonal matrix with positive diagonal entries; Let $\boldsymbol{y} = \boldsymbol{d}(n_{col} - m+2 : n_{col})$, then $B_4 = (\boldsymbol{1}_N \boldsymbol{1}_N^{\top} ) \otimes diag(\boldsymbol{y})$, and $ \boldsymbol{\alpha} = -\boldsymbol{1}_N \otimes \boldsymbol{y}$; $ c = \boldsymbol{1}_{m}^{\top} \boldsymbol{d}(n_{col} - m+1 : n_{col}) $.
\end{lemma}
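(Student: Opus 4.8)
The plan is to expand $\bar A D\bar A^{\top}$ directly from the block form already exhibited for $A$, so that the whole argument reduces to block- and Kronecker-algebra. First I would record the effect of Lemma~\ref{precondition-lemma}: partitioning the columns according to $\boldsymbol{x}=(vec(\Pi^{(1)});\dots;vec(\Pi^{(N)});\boldsymbol{w})$, the reduced matrix is
\[
\bar A=\begin{bmatrix} E_1 & \boldsymbol{0}\\ \tilde E_2 & \tilde E_3\\ \boldsymbol{0} & \boldsymbol{1}_m^{\top}\end{bmatrix},
\]
where $E_1$ is unchanged and $\tilde E_2,\tilde E_3$ are obtained from $E_2,E_3$ by deleting the first row of each consecutive block of $m$ rows; equivalently $\tilde E_2=diag(\boldsymbol{1}_{m_1}^{\top}\otimes\hat I,\dots,\boldsymbol{1}_{m_N}^{\top}\otimes\hat I)$ and $\tilde E_3=-\boldsymbol{1}_N\otimes\hat I$, with $\hat I\in\R^{(m-1)\times m}$ the matrix $I_m$ with its first row removed. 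Splitting $D=diag(D_\Pi,D_w)$ conformally, where $D_w=diag(\boldsymbol{d}(n_{col}-m+1:n_{col}))$, a $3\times 3$ block multiplication gives at once $B_1=E_1D_\Pi E_1^{\top}$, $B_2=E_1D_\Pi\tilde E_2^{\top}$, $B_3=\tilde E_2D_\Pi\tilde E_2^{\top}$, $B_4=\tilde E_3D_w\tilde E_3^{\top}$, $\boldsymbol{\alpha}=\tilde E_3D_w\boldsymbol{1}_m$, $c=\boldsymbol{1}_m^{\top}D_w\boldsymbol{1}_m$, and the two vanishing off-diagonal blocks (the first block row of $\bar A$ has no $\boldsymbol{w}$-columns and the last has no $\Pi$-columns).

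Next I would treat the three block-diagonal pieces. Since $E_1$, $\tilde E_2$ and $D_\Pi$ all carry the same $N$-block pattern, so do $B_1,B_2,B_3$; the $t$-th diagonal block of $B_1$ is $(I_{m_t}\otimes\boldsymbol{1}_m^{\top})D_\Pi^{(t)}(I_{m_t}\otimes\boldsymbol{1}_m)$ and that of $B_3$ is $(\boldsymbol{1}_{m_t}^{\top}\otimes\hat I)D_\Pi^{(t)}(\boldsymbol{1}_{m_t}\otimes\hat I^{\top})$. The elementary but essential point is that the rows of $I_{m_t}\otimes\boldsymbol{1}_m^{\top}$ (respectively of $\boldsymbol{1}_{m_t}^{\top}\otimes\hat I$) have pairwise disjoint supports; hence $RD_\Pi^{(t)}R^{\top}$ is diagonal whenever $R$ is one of these matrices, and each diagonal entry is a sum over a nonempty index set of (strictly positive) diagonal entries of $D$, so it is strictly positive. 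This establishes the assertions on $B_1$ ($M\times M$, diagonal, positive) and $B_3$ ($M_2\times M_2$, diagonal, positive), and the $N$-block-diagonal structure of $B_2\in\R^{M\times M_2}$ with blocks of the stated sizes follows by inspecting dimensions.

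Finally, for $B_4$, $\boldsymbol{\alpha}$ and $c$ I would apply the mixed-product rule $(P\otimes Q)(R\otimes S)=(PR)\otimes(QS)$. Writing $D_w=1\otimes D_w$, we get $B_4=(\boldsymbol{1}_N\otimes\hat I)(1\otimes D_w)(\boldsymbol{1}_N^{\top}\otimes\hat I^{\top})=(\boldsymbol{1}_N\boldsymbol{1}_N^{\top})\otimes(\hat I D_w\hat I^{\top})$, and $\hat I D_w\hat I^{\top}=diag(\boldsymbol{y})$ with $\boldsymbol{y}=\boldsymbol{d}(n_{col}-m+2:n_{col})$, since deleting the first row of $I_m$ deletes exactly the first diagonal entry of $D_w$; similarly $\boldsymbol{\alpha}=\tilde E_3D_w\boldsymbol{1}_m=-\boldsymbol{1}_N\otimes(\hat I D_w\boldsymbol{1}_m)=-\boldsymbol{1}_N\otimes\boldsymbol{y}$ and $c=\boldsymbol{1}_m^{\top}D_w\boldsymbol{1}_m=\boldsymbol{1}_m^{\top}\boldsymbol{d}(n_{col}-m+1:n_{col})$. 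Collecting these identifications yields the claimed form.

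I do not anticipate a real obstacle: the only place needing care is bookkeeping — keeping the $vec$ ordering and the index set of the deleted rows aligned, so that ``delete the first row of the $t$-th $\boldsymbol{w}$-block'' is correctly matched with ``drop the coordinate $w_1$'' — together with the short disjoint-support observation, which is exactly what makes $B_1$ and $B_3$ genuinely diagonal rather than merely block-diagonal.
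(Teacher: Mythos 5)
Your proposal is correct and follows essentially the same route as the paper's proof: partition $\bar A$ into the three row blocks corresponding to the constraint groups, split $D$ conformally, carry out the $3\times 3$ block multiplication, and identify each resulting block, with the diagonality of $B_1$ and $B_3$ coming from the disjoint row supports of $I_{m_t}\otimes\boldsymbol{1}_m^{\top}$ and $\boldsymbol{1}_{m_t}^{\top}\otimes\hat I$. The only differences are cosmetic (Kronecker-product notation in place of the paper's explicit block arrays, and a slightly more detailed justification of why those products are genuinely diagonal).
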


\paragraph{Single low-rank regularization method (SLRM).}
Briefly speaking, we will perform several basic transformations on the matrix $\bar A D \bar A^T$ to transform it into an easy-to-solve format. Then we solve the system with the transformed coefficient matrix and finally transform the obtained solution back to get an solution of $(\bar A D \bar A^{\top})\boldsymbol{z} = \boldsymbol{f}$.

Define $
V_1:=
  \begin{bmatrix}
  I_M   &   &   \\
  -B_2^{\top}B_1^{-1} & I_{M_2} & \\
     &   &  1
  \end{bmatrix}, \ \
  V_2:=
  \begin{bmatrix}
  I_M   &   &   \\
        & I_{M_2} & -\boldsymbol{\alpha}/c \\
     &   &  1
  \end{bmatrix}
  $,
  $ A_1 := B_3 - B_2^{\top} B_1^{-1} B_2$ and $ A_2 := B_4 - \frac{1}{c} \boldsymbol{\alpha}\boldsymbol{\alpha}^{\top}$.
Then,
$$
V_2V_1 \bar A D \bar A^T V_1^{\top} V_2^{\top} =
\begin{bmatrix}
B_1 & & \\
  &  B_3 - B_2^{\top} B_1^{-1} B_2 + B_4 - \frac{1}{c} \boldsymbol{\alpha}\boldsymbol{\alpha}^{\top} & \\
  &  &  c
\end{bmatrix}
=
\begin{bmatrix}
B_1 & & \\
  &  A_1+A_2 & \\
  &  &  c
\end{bmatrix}.
$$
Define $Y = diag(\boldsymbol{y})- \frac{1}{c}\boldsymbol{y}\boldsymbol{y}^{\top}$,
we have the following lemma.

\begin{lemma}\label{basic2}
\mbox{}\par
\textbf{a)}  $A_1$ is a block-diagonal matrix with N blocks. The size of each block is $(m-1)\times(m-1)$. Further more, $A_1$ is positive definite and strictly diagonal dominant.
\textbf{b)}  $ A_2 = (\boldsymbol{1}_N\boldsymbol{1}_N^{\top})\otimes Y $, and $Y$ is positive definite and strictly diagonal dominant.
\end{lemma}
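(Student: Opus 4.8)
The plan is to prove both parts by a short explicit computation that exploits the block structure already exhibited in Lemma \ref{structure1}; the one genuinely informative point is that the redundancy removal in $\bar A$ (deleting, from each of the $N$ marginal blocks, the row attached to the first coordinate) is exactly what makes the diagonal-dominance inequalities \emph{strict}. First I would unwind the block-diagonal structure. Writing $D_\Pi^{(t)}$ for the diagonal block of $D$ indexed by $\mathrm{vec}(\Pi^{(t)})$, with entries $d^{(t)}_{ij}>0$ ($i\in\{1,\dots,m\}$, $j\in\{1,\dots,m_t\}$), and $c^{(t)}_j:=\sum_{i=1}^m d^{(t)}_{ij}$, one reads off from the Kronecker form of $E_1$ and $\bar E_2$ (cf.\ the proof of Lemma \ref{structure1}) that $B_1,B_2,B_3$ are block-diagonal with $N$ blocks whose $t$-th blocks are $B_1^{(t)}=\mathrm{diag}(c^{(t)}_1,\dots,c^{(t)}_{m_t})$, $(B_2^{(t)})_{j,i}=d^{(t)}_{ij}$ (rows $j\in\{1,\dots,m_t\}$, columns $i\in\{2,\dots,m\}$, the $i=1$ row having been deleted), and $B_3^{(t)}=\mathrm{diag}\big(\sum_j d^{(t)}_{2j},\dots,\sum_j d^{(t)}_{mj}\big)$. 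This makes $A_1=B_3-B_2^{\top}B_1^{-1}B_2$ block-diagonal with $(m-1)\times(m-1)$ blocks $A_1^{(t)}=B_3^{(t)}-(B_2^{(t)})^{\top}(B_1^{(t)})^{-1}B_2^{(t)}$, which is the structural claim of (a).

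Next I would compute $A_1^{(t)}$ entrywise: for $i,i'\in\{2,\dots,m\}$ one gets $(A_1^{(t)})_{i,i'}=[i=i']\sum_j d^{(t)}_{ij}-\sum_j d^{(t)}_{ij}d^{(t)}_{i'j}/c^{(t)}_j$. Hence the off-diagonal entries are $-\sum_j d^{(t)}_{ij}d^{(t)}_{i'j}/c^{(t)}_j<0$, the diagonal entry is $\sum_j d^{(t)}_{ij}(c^{(t)}_j-d^{(t)}_{ij})/c^{(t)}_j>0$ (positive since $c^{(t)}_j-d^{(t)}_{ij}\ge d^{(t)}_{1j}>0$, the index $1$ being excluded from $\{2,\dots,m\}$), and the row-$i$ diagonal-dominance gap collapses to $\sum_j \tfrac{d^{(t)}_{ij}}{c^{(t)}_j}\big(c^{(t)}_j-\sum_{i'=2}^m d^{(t)}_{i'j}\big)=\sum_j d^{(t)}_{ij}d^{(t)}_{1j}/c^{(t)}_j>0$. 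So each $A_1^{(t)}$ is symmetric, strictly row-diagonally dominant with positive diagonal, hence positive definite, and so is the direct sum $A_1$.

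For (b) I would first use Kronecker algebra: from $B_4=(\boldsymbol 1_N\boldsymbol 1_N^{\top})\otimes\mathrm{diag}(\boldsymbol y)$ and $\boldsymbol\alpha=-\boldsymbol 1_N\otimes\boldsymbol y$ (Lemma \ref{structure1}) one gets $\boldsymbol\alpha\boldsymbol\alpha^{\top}=(\boldsymbol 1_N\boldsymbol 1_N^{\top})\otimes(\boldsymbol y\boldsymbol y^{\top})$, so $A_2=(\boldsymbol 1_N\boldsymbol 1_N^{\top})\otimes\big(\mathrm{diag}(\boldsymbol y)-\tfrac1c\boldsymbol y\boldsymbol y^{\top}\big)=(\boldsymbol 1_N\boldsymbol 1_N^{\top})\otimes Y$, the first assertion of (b). Then I would repeat the computation above for $Y$: writing $\boldsymbol y=(y_2,\dots,y_m)^{\top}$ with $y_i>0$, and noting $c=\sum_{k=1}^m d^w_k=y_1+\sum_{i=2}^m y_i>\sum_{i=2}^m y_i$ with $y_1:=d^w_1>0$, one finds $Y_{i,i'}=-y_iy_{i'}/c<0$ for $i\ne i'$, $Y_{i,i}=y_i(c-y_i)/c>0$, and row-$i$ gap $\tfrac{y_i}{c}\big(c-\sum_{i'=2}^m y_{i'}\big)=y_iy_1/c>0$; hence $Y$ is strictly diagonally dominant with positive diagonal and thus positive definite. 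In both parts the passage from strict diagonal dominance to positive definiteness is the classical Gershgorin argument.

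I do not expect a real obstacle: the proof is essentially bookkeeping. The only points requiring care are getting the explicit entrywise forms of $B_1,B_2,B_3$ right (which hinges on tracking which row was deleted from each marginal block) and recognizing that this deleted coordinate is precisely what supplies the strictly positive surplus $d^{(t)}_{1j}$ (resp.\ $y_1$) in the diagonal-dominance estimate — without the preconditioning of Lemma \ref{precondition-lemma} one would only obtain weak dominance and mere semidefiniteness.
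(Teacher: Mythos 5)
Your proof is correct and follows essentially the same route as the paper's: both reduce everything to the sign pattern of $A_1$ (positive diagonal, nonpositive off-diagonal) together with $A_1\boldsymbol{1}_{M_2}>0$, and both treat $Y$ as the $N=1$, scalar-$B_1$ instance of the same computation. The only cosmetic differences are that you verify $A_1\boldsymbol{1}_{M_2}>0$ by explicit entrywise bookkeeping (identifying the surplus $\sum_j d^{(t)}_{ij}d^{(t)}_{1j}/c^{(t)}_j$ contributed by the deleted row) where the paper packages the same fact as the matrix identities $B_3\boldsymbol{1}_{M_2}=B_2^{\top}\boldsymbol{1}_M$ and $B_1\boldsymbol{1}_M-B_2\boldsymbol{1}_{M_2}>0$, and that you invoke Gershgorin directly for positive definiteness where the paper runs the equivalent contradiction argument on a hypothetical nonpositive eigenvalue.
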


\begin{wrapfigure}[17]{R}{0.75\textwidth} 
\vspace{-3pt}
\hfill
\begin{minipage}[c]{0.73\textwidth}
\begin{algorithm}[H]
\caption{Solver for the normal equation $(\bar A  D \bar A^T) \boldsymbol{z} =\boldsymbol{f}$}
\label{Normal equation solver}
\KwIn{$\boldsymbol{d} = diag(D) \in \R^{n_{col}}$; $\boldsymbol{f} \in \R^{M+N(m-1)+1} $}

 compute $B_1, B_2, B_3$, vector $\boldsymbol{y} = \boldsymbol{d}(n_{col} - m+2 : n_{col})$ and $c$;

 compute $ T= B_2^{\top}B_1^{-1}$ and matrices $ V_1,V_2 $;

 compute $ A_1 = B_3 - T B_2 $ and $ A_2= (\boldsymbol{1}_N\boldsymbol{1}_N^{\top})\otimes(diag(\boldsymbol{y})- \frac{1}{c}\boldsymbol{y}\boldsymbol{y}^{\top} )$;

 compute $ \boldsymbol{z}^{(1)} = V_1  \boldsymbol{f}$ and $ \boldsymbol{z}^{(2)} =V_2  \boldsymbol{z}^{(1)}$;

 compute $ \boldsymbol{z}^{(3)}(1:M) = B_1^{-1}\boldsymbol{z}^{(2)}(1:M)$;

 compute $\boldsymbol{z}^{(3)}(M+M_2+1) = \frac{1}{c}\boldsymbol{z}^{(2)}(M+M_2+1)$;

 solve the linear system with coefficient matrix $A_1+A_2$ to get $ \boldsymbol{z}^{(3)}(M+1:M+M_2) = (A_1+A_2)^{-1} \boldsymbol{z}^{(2)}(M+1:M+M_2) $;

 compute $ \boldsymbol{z}^{(4)} = V_2^{\top}  \boldsymbol{z}^{(3)}$ , $ \boldsymbol{z} =V_1^{\top}  \boldsymbol{z}^{(4)}$;

 \KwOut{ $\boldsymbol{z}$}

\end{algorithm}
\end{minipage}
\end{wrapfigure}

Since the positive definiteness and diagonal dominance claimed in this lemma, the computation of the inverse matrices of each block of $A_1$ and $A_2$ is numerically stable.
Now we introduce the procedure for solving $ (\bar A  D \bar A^T) \boldsymbol{z} =\boldsymbol{f} $, as descried in Algorithm \ref{Normal equation solver} ($z^{(1)}$ - $z^{(4)}$ in the algorithm are intermediate variables).
In step 7, we need to solve a linear system with coefficient matrix of dimension $N(m-1)\times N(m-1)$, which is hard to compute with common methods for dense symmetric matrices. In view of the low-rank structure of the matrix $A_2$, we introduce a method, namely Single Low-rank Regularization Method (SLRM), which requires only $O(Nm^3)$ flops in computation. Assume $A_1 = diag(A_{11}, A_{22}, ..., A_{NN})$ and define
$ U = \begin{bmatrix} I_{N-1} & \boldsymbol{1}_{N-1} \\ 0 & 1 \end{bmatrix} \otimes I_{m-1}$.
We can solve the linear system $ (A_1+A_2)\boldsymbol{x} = \boldsymbol{g} $ by Algorithm \ref{Low-rank regularization method}.

\begin{wrapfigure}[16]{R}{0.55\textwidth} 
\vspace{-5pt}
\hfill
\begin{minipage}[c]{0.53\textwidth}
\begin{algorithm}[H]
\caption{SLRM for the system $ (A_1+A_2)\boldsymbol{x} = \boldsymbol{g} $}
\label{Low-rank regularization method}
\KwIn{$~A_1,~A_2,~\boldsymbol{g}$}

     compute $A_{ii}^{-1},i=1,..,N$\;

     set $A_1^{-1} = diag( A_{11}^{-1},..., A_{NN}^{-1})$\;

     compute $\boldsymbol{x}^{(1)} = A_1^{-1}\boldsymbol{g}$\;

     compute $\boldsymbol{x}^{(2)} = U^T \boldsymbol{x}^{(1)}$\;

     compute $ \boldsymbol{x}^{(3)}(end-m+2 : end ) =( Y^{-1} + \sum_{i=1}^{N}A_{ii}^{-1} )\backslash \boldsymbol{x}^{(2)}(end-m+2 : end )  $\;

     set $\boldsymbol{x}^{(3)}(1 : end-m+1 )= 0$~\;

      compute $\boldsymbol{x}^{(4)} = U \boldsymbol{x}^{(3)}$ and $\boldsymbol{x}^{(5)} = A_1^{-1} \boldsymbol{x}^{(4)} $\;

      compute $ \boldsymbol{x} = \boldsymbol{x}^{(1)} - \boldsymbol{x}^{(5)} $\;

      \KwOut{$\boldsymbol{x}$}
\end{algorithm}
\end{minipage}
\end{wrapfigure}

The proof of correctness of Algorithm \ref{Low-rank regularization method} and other analysis is available in the appendix.


\paragraph{Double low-rank regularization method (DLRM) when $m$ is large.}\label{further low-rank regularization}
In many applications, $m$ is relatively large compared to $m_t$. For instance, in the area of image identification, the pixel support points of the images at hand are sparse (small $m_t$) but different. To find the "barycenter" of these images, we need to assume the "barycenter" image has much more pixel support points (large $m$) than all the sample images. Sometimes, $m$ might be about 5 to 20 times of each $m_t$. In this case, the computational cost of step 1 in SLRM is heavy, since we need to solve $N$ linear systems with dimension $m\times m$. In this subsection, we use the low rank regularization formula to further reduce the computational cost.

In view of lemma \ref{structure1}, assume
$$
B_1 = diag(B_{11},...,B_{1N}),\ \ B_2 = diag(B_{21},...,B_{2N}), \ \ B_3 = diag(B_{31},...,B_{3N}).
$$
where $B_{1i}\in \R^{m_i \times m_i}$, $ B_{2i} \in \R^{m_i \times (m-1)}$ and $B_{3i} \in \R^{(m-1)\times (m-1)}$. Recall that $A_1 = B_3 - B_2^{\top} B_1^{-1} B_2$ and  $A_1 = diag(A_{11}, ..., A_{NN})$, we have $A_{ii} = B_{3i} - B_{2i}^{\top}B_{1i}^{-1}B_{2i} $. Since $m>>m_i$, we can use the following formula:
\begin{equation}\label{Aii-lowrank-reg}
    A_{ii}^{-1} = (B_{3i} - B_{2i}^{\top}B_{1i}^{-1}B_{2i})^{-1} = B_{3i}^{-1} + B_{3i}^{-1}B_{2i}^{\top} (B_{1i} - B_{2i}B_{3i}^{-1}B_{2i}^{\top})^{-1}B_{2i}B_{3i}^{-1}.
\end{equation}
Instead of calculating and storing each $A_{ii}$ explicitly, we can just calculate and store each $ (B_{1i} - B_{2i}B_{3i}^{-1}B_{2i}^{\top})^{-1} $. When we need to calculate $A_{ii}\boldsymbol{y}$ for some vector $\boldsymbol{y}$, we can use \eqref{Aii-lowrank-reg} and sequentially multiply each matrix with vectors. As a result, the flops required in step 1 of SLRM reduce to $O(m\Sigma_{i=1}^{N}m_i^2 +\Sigma_{i=1}^{N}m_i^3 )$, and the total memory usage of whole MAAIPM is $O(m\Sigma_{i=1}^N m_i)$, which is at the same level (except for a constant) of a primal variable.


\paragraph{Complexity analysis.}
The following theorem summarizes the time and space complexity of the aforementioned two methods.

\begin{theorem}\label{cost}
\textbf{a)} For SLRM,
the time complexity in terms of flops is $O(m^2 \sum_{i=1}^N m_i + Nm^3)$, and the memory usage in terms of doubles is $O(m\sum_{i=1}^N m_i + Nm^2)$;
\textbf{b)} For the DLRM, the time complexity in terms of flops is $O( m \sum_{i=1}^N m_i^2 + \sum_{i=1}^N m_i^3)$, and the memory usage in terms of doubles is $O(m\sum_{i=1}^N m_i + \sum_{i=1}^N m_i^2)$.
\end{theorem}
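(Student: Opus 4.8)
The plan is to prove both parts by a direct, line-by-line accounting of Algorithms~\ref{Normal equation solver} and~\ref{Low-rank regularization method}, using the structural descriptions in Lemmas~\ref{structure1} and~\ref{basic2}: part~(a) is the cost of running Algorithm~\ref{Normal equation solver} with Algorithm~\ref{Low-rank regularization method} plugged into line~7, and part~(b) is the cost of the same pair of algorithms once the blocks of $A_1$ are represented implicitly through the Woodbury identity~\eqref{Aii-lowrank-reg}. Throughout I will use that $\bar A\bar A^\top$, and hence all the derived blocks, have $O(m\sum_i m_i)$ nonzero entries, and that each $B_{2i}$ is a dense $m_i\times(m-1)$ block while $B_1$, $B_3$ and $\mathrm{diag}(\boldsymbol y)$ are diagonal.

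For part~(a): lines~1--2 of Algorithm~\ref{Normal equation solver} (extracting $B_1,B_2,B_3,\boldsymbol y,c$ and forming $T=B_2^\top B_1^{-1}$, $V_1$, $V_2$) are extractions and column-scalings and cost $O(m\sum_i m_i)$; line~3 forms $A_1$ block by block as $N$ products $B_{3i}-B_{2i}^\top B_{1i}^{-1}B_{2i}$ of an $(m-1)\times m_i$ by an $m_i\times(m-1)$ matrix, which is $O(m^2\sum_i m_i)$ in total, and forms $Y$ in $O(m^2)$ (the matrix $A_2=(\boldsymbol 1_N\boldsymbol 1_N^\top)\otimes Y$ is never materialized); lines~4--6 and line~8 are the triangular and diagonal solves with $V_1,V_2,B_1,c$, whose only nontrivial piece is multiplication by the $O(m\sum_i m_i)$-nonzero factor $T$. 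The remaining cost is line~7, i.e.\ Algorithm~\ref{Low-rank regularization method}: inverting the $N$ dense blocks $A_{ii}\in\R^{(m-1)\times(m-1)}$ in its line~1 costs $O(Nm^3)$; assembling $Y^{-1}+\sum_i A_{ii}^{-1}$ and factoring this single $(m-1)\times(m-1)$ matrix in its line~5 costs $O(Nm^2+m^3)$; and the $U$-multiplications and block-diagonal solves in its lines~3--4 and~7--8 cost $O(Nm^2)$. Adding these gives the flop bound $O(m^2\sum_i m_i+Nm^3)$. For memory, the coordinate data ($B_2$, $T$, the right-hand side, the intermediate vectors) take $O(m\sum_i m_i)$ words --- the same order as one primal iterate --- while the $N$ dense blocks $A_{ii}$ and their inverses, together with $Y$, take $O(Nm^2)$, and the diagonal pieces are $O(\sum_i m_i+m)$; this yields $O(m\sum_i m_i+Nm^2)$. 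By Lemma~\ref{basic2} every block being inverted is positive definite and strictly diagonally dominant, so no extra conditioning overhead is incurred.

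For part~(b): the only change is that we never form or invert the dense $(m-1)\times(m-1)$ blocks $A_{ii}=B_{3i}-B_{2i}^\top B_{1i}^{-1}B_{2i}$; instead, in line~1 of Algorithm~\ref{Low-rank regularization method} we compute and store only the $m_i\times m_i$ matrix $(B_{1i}-B_{2i}B_{3i}^{-1}B_{2i}^\top)^{-1}$, which by \eqref{Aii-lowrank-reg} needs one $m_i\times(m-1)$-by-$(m-1)\times m_i$ product and one $m_i\times m_i$ inversion (the $B_{3i}$ being diagonal), i.e.\ $O(m\,m_i^2+m_i^3)$ per block, for a total of $O(m\sum_i m_i^2+\sum_i m_i^3)$. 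Every later use of $A_{ii}$ or $A_{ii}^{-1}$ acts on a vector, so it is carried out through \eqref{Aii-lowrank-reg} as a short chain of matrix--vector products of cost $O(m\,m_i)$, hence $O(m\sum_i m_i)$ overall, absorbed by the previous term. Since $Y$ and $Y^{-1}$ are themselves diagonal-plus-rank-one (because $\mathrm{diag}(\boldsymbol y)^{-1}\boldsymbol y=\boldsymbol 1$), they are stored in $O(m)$ space, so the only extra storage over the coordinate data is the $N$ small matrices, giving $O(m\sum_i m_i+\sum_i m_i^2)$ words.

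The genuinely delicate step --- and the one I expect to be the main obstacle --- is line~5 of Algorithm~\ref{Low-rank regularization method} in the DLRM: one must solve a system whose $(m-1)\times(m-1)$ coefficient matrix is $Y^{-1}+\sum_i A_{ii}^{-1}$, and one can afford neither to assemble $\sum_i A_{ii}^{-1}$ explicitly (that would reintroduce the $O(m^2\sum_i m_i)$ term) nor, when $m$ is much larger than the $m_i$, to factor a dense $(m-1)\times(m-1)$ matrix. The resolution I would pursue is to write this coefficient matrix as a diagonal matrix (the $\mathrm{diag}(1/\boldsymbol y)$ and $\sum_i B_{3i}^{-1}$ parts) plus a low-rank perturbation of rank $1+\sum_i m_i$ (the $\boldsymbol 1\boldsymbol 1^\top$ part of $Y^{-1}$ together with the factors $B_{2i}B_{3i}^{-1}$ supplied by \eqref{Aii-lowrank-reg}), and then apply the matrix--inversion lemma once more; what remains is to verify that forming and factoring the resulting capacitance matrix stays within $O(m\sum_i m_i^2+\sum_i m_i^3)$ in the regime $m\gg m_i$ for which DLRM is designed. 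Everything else in the two statements is routine flop- and word-counting, and all the inverses that appear are well posed by Lemmas~\ref{structure1}--\ref{basic2}.
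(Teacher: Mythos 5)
Your part (a) is correct and is essentially the paper's own argument: the same line-by-line flop count of Algorithm~\ref{Normal equation solver} (with the formation of $A_1$ and the $N$ block inversions inside Algorithm~\ref{Low-rank regularization method} identified as the dominant $O(m^2\sum_i m_i)$ and $O(Nm^3)$ terms), and the same inventory of stored objects ($\bar A$, $B_2$ and the intermediate vectors at $O(m\sum_i m_i)$ words, the $N$ dense $(m-1)\times(m-1)$ blocks at $O(Nm^2)$).

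For part (b) you have correctly isolated the one step that neither your accounting nor the paper's covers, but you have not closed it, and the fix you sketch does not stay within the claimed bound. The paper's proof of (b) costs the computation of the $N$ matrices $(B_{1i}-B_{2i}B_{3i}^{-1}B_{2i}^{\top})^{-1}$ at $O(m\sum_i m_i^2+\sum_i m_i^3)$ and then asserts that ``all other matrix-vector operations are cheap''; it never explains how line~5 of Algorithm~\ref{Low-rank regularization method} --- the solve with $Y^{-1}+\sum_{i}A_{ii}^{-1}$ --- is performed when the $A_{ii}^{-1}$ are available only implicitly through \eqref{Aii-lowrank-reg}. You are right that assembling this $(m-1)\times(m-1)$ matrix explicitly reintroduces $O(m^2\sum_i m_i)$ work plus an $O(m^3)$ factorization, neither of which is dominated by $O(m\sum_i m_i^2+\sum_i m_i^3)$ in the regime $m\gg m_i$ for which DLRM is intended. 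However, the resolution you propose --- applying the matrix-inversion lemma once more to a diagonal-plus-rank-$\bigl(1+\sum_i m_i\bigr)$ representation --- yields a dense capacitance matrix of dimension $1+\sum_i m_i$: forming it costs $O\bigl(m(\sum_i m_i)^2\bigr)$ and factoring it costs $O\bigl((\sum_i m_i)^3\bigr)$, which for $N>1$ exceed the target terms $O(m\sum_i m_i^2)$ and $O(\sum_i m_i^3)$ by factors of order $N$ and $N^2$ respectively (the cross-blocks $V_iD_0^{-1}V_j^{\top}$ are dense, so the capacitance matrix has no exploitable block-diagonal structure). So the verification you defer in your final paragraph is not a formality: as written, your argument establishes part (a) but not the flop bound of part (b), and completing (b) requires either a genuinely different organization of the line-5 solve or a weaker stated complexity.
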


We can choose between SLRM and DLRM for different cases to achieve lower time and space complexity. Note that as $N,m,m_i$ grows up, the memory usage here is within an constant time of the representative Sinkhorn type algorithms like IBP\cite{IBP}.

\section{Experiments}\label{numerical studies}

We conduct three numerical experiments to investigate the real performance of our methods. The first experiment shows the advantages of SLRM and DLRM over traditional approaches in solving Newton equations with a same structure as barycenter problems. The second experiment fully demonstrates the merits of MAAIPM: high speed/accuracy and more efficient memory usage. In the last experiment with real benchmark data, MAAIPM recovers the images better than any other approach implemented.
In different experiments, we compare our methods with state-of-art commercial solvers(MATLAB, Gurobi, MOSEK), the iterative Bregman projection (IBP) by  \cite{IBP}, Bregman ADMM (BADMM) \cite{nips-BADMM,BADMM}. The result also illustrates MAAIPM's superiority over symmetric Gauss-Seidel ADMM (sGS-ADMM) \cite{sGS-ADMM}.

All experiments are run in Matlab R2018b on a workstation with two processors, Intel(R) Xeon(R) Processor E5-2630@2.40Ghz (8 cores and 16 threads per processor) and 64GB of RAM, equipped with 64-bit Windows 10 OS.
Full experiment details are available in the appendix.

\begin{wrapfigure}[21]{r}{0.45\linewidth}
\vspace{-15pt}
    \centering
    \includegraphics[width=1\linewidth]{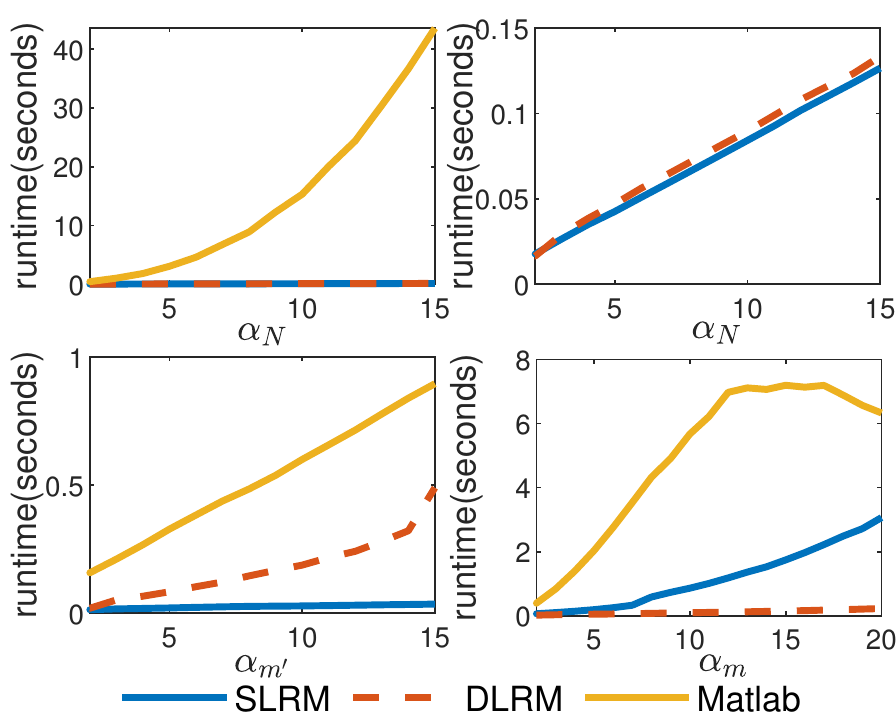}
    \caption{Average computation time of 200 independent trials in solving the linear system. Entries of diagonal $D$ and $\boldsymbol{f}$ are generated by uniform distribution in $(0,1)$. In base situation, $N=50$, $m=50$, $m'=25$. Sub-figures show the computation times when rescaling $N$, $m$ and $m_1=\cdots =m_N=m'$ by respectively $\alpha_N,~\alpha_m$ and $\alpha_{m'}$ times.}
    \label{fig:equations}
\end{wrapfigure}

\vspace{-5pt}

\paragraph{Experiments on solving the normal equations:}
For figure \ref{fig:equations}, one can see that both SLRM and DLRM clealy outperform the Matlab solver in  in all cases. For computation time, SLRM increases linearly with respect to $N$ and $m'$, and DLRM increases linearly with respect to $N$ and $m$, which matches the conclusions in Theorem \ref{cost}.
In practice, we select SLRM when $m^2 \le 4\sum_{t=1}^N m_t^2$ and DLRM when $m^2 > 4\sum_{t=1}^Nm_t^2$.

\vspace{-5pt}

\paragraph{Experiments on barycenter problems:}
In this experiment, we set $d = 3$ for convenience. For $\mathcal{P}^{(t)}$, each entry of $(\boldsymbol{q}_1^{(t)},\ldots,\boldsymbol{q}_{m'}^{(t)})$ is generated with i.i.d. standard Gaussian distribution.
The entries of the weight vectors $(a_1^{(t)},\dots,a_{m'}^{(t)})$ are simulated by uniform distribution on $(0,1)$ and then are normalized.
Next we apply the $k$-means\footnote{We call the Malab function "kmeans" in statistics and machine learning toolbox.} method to choose $m$ points to be the support points.
Note that Gurobi and MOSEK use a crossover strategy  when close to the exact solution to ensure obtaining a highly accurate solution, we can regard Gurobi's objective value $\mathcal{F}_{gu}$ as the exact optimal value of the linear program \eqref{fix}.
Let "normalized obj" denote the normalized objective value defined by
$\vert\mathcal{F}_{method}-\mathcal{F}_{gu}\vert/\mathcal{F}_{gu}$, where $\mathcal{F}_{method}$ is the objective value respectively obtained by each method.
Let "feasibility error" denote $
    \max\Big\{
    \frac{\Vert \{\Pi^{(t)}\boldsymbol{1}_{m_t}-\boldsymbol{w}\}\Vert_F}{1+\Vert \boldsymbol{w}\Vert_F + \Vert\{\Pi^{(t)}\}\Vert_F},
    \frac{\Vert \{(\Pi^{(t)})^{\top}\boldsymbol{1}_{m}-\boldsymbol{a}^{(t)}\}\Vert_F}{1+\Vert \{\boldsymbol{a}^{(t)}\}\Vert_F + \Vert\{\Pi^{(t)}\}\Vert_F},
    \vert \boldsymbol{1}^{\top}\boldsymbol{w}-1\vert
    \Big\}$, as a measure of the distance to the feasible set.

\begin{wrapfigure}[13]{r}{0.5\linewidth}
\vspace{-10pt}
\includegraphics[width=1\linewidth]{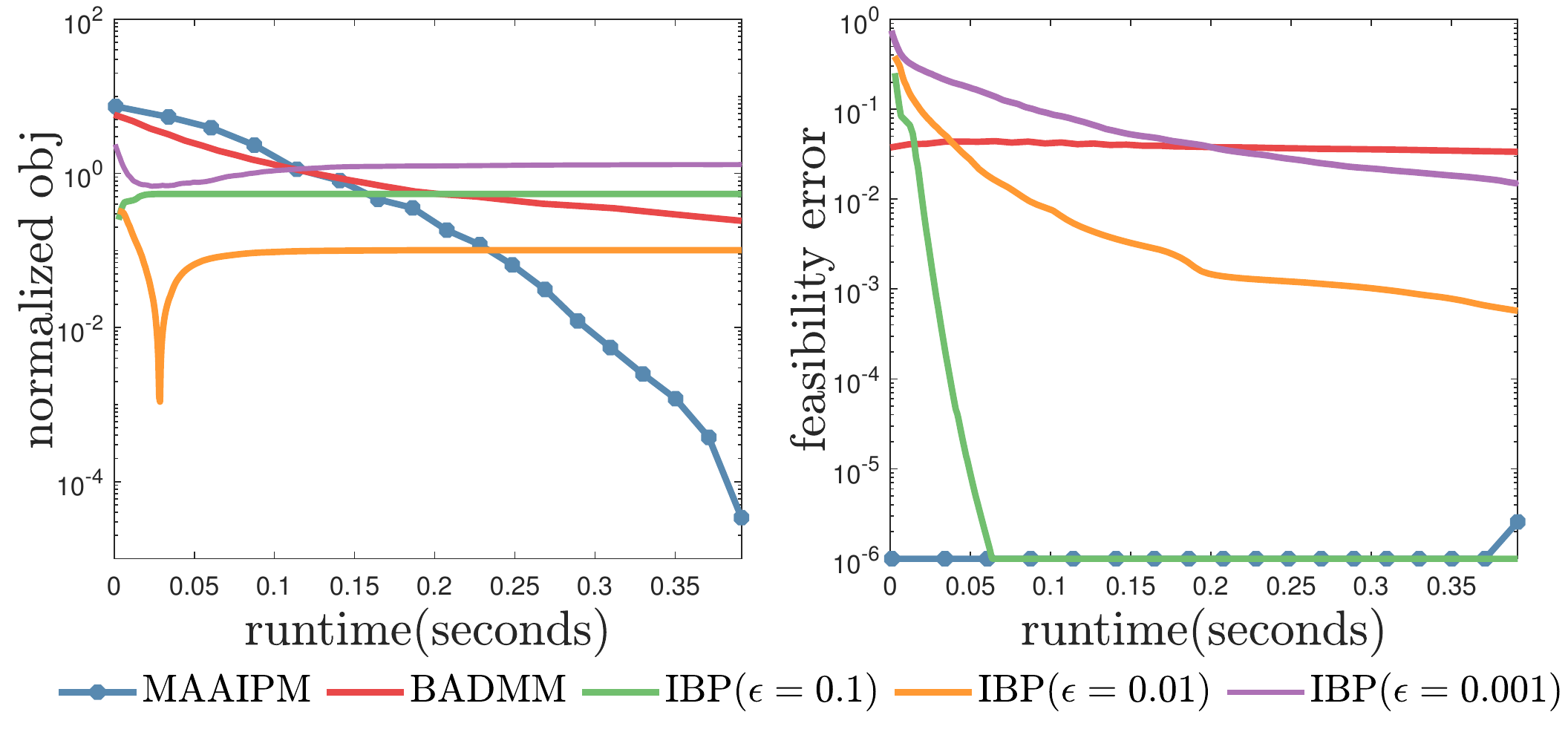}
\caption{Performance of methods in pre-specified support cases. $N = m =50$ and $m_1=\cdots=m_N=50$}
\label{fig:advantage}
\end{wrapfigure}

From figure \ref{fig:advantage}, we see that MAAIPM displays a super-linear convergence rate for the objective, which is consistent with the result of \cite{Ye1993}. Note that the feasibility error of MAAIPM increases a little bit near the end but is still much lower than BADMM and IBP. Although other methods may have lower objective values in early stages, their solutions are not acceptable due to high feasibility errors.

Then we run numerical experiments to test the computation time of methods in pre-specified support points cases.
For MAAIPM, we terminate it when $({\boldsymbol{b}}^{\top}\boldsymbol{\lambda}_k-\boldsymbol{c}^{\top}\boldsymbol{x}_k)/(1+\vert \boldsymbol{b}^{\top}\boldsymbol{\lambda}_k\vert+\vert \boldsymbol{c}^\top \boldsymbol{x}_k \vert)$ is less than $5\times10^{-5}$.
For sGS-ADMM,  we compare with it indirectly by the benchmark claimed in their paper \cite{sGS-ADMM}: commercial solver Gurobi 8.1.0 \cite{Gurobi} (academic license) with the default parameter settings.
We also compare with another commercial solver MOSEK 9.1.0(academic license). In our observation, MAAIPM can frequently perform better than other popular commercial solvers.
We use the default parameter setting(optimal for most cases)  for Gurobi and MOSEK so that they can exploit multiple processors (16 threads) while other methods are implemented with only one thread\footnote{We call the Matlab function "maxNumCompThreads(1)"}.
For BADMM, we follow the algorithm 4 in \cite{BADMM} to implement and terminate when $\Vert \Pi^{(k,1)}-\Pi^{(k,2)}\Vert_{F}/(1+\Vert \Pi^{(k,1)}\Vert_{F} + \Vert \Pi^{(k,2)}\Vert_{F})<10^{-5}$. Set $\Vert \{A_{t}\}\Vert_F = \big(\sum_{t=1}^N\Vert A_t\Vert_F^2\big)^{\frac{1}{2}}$.
For IBP, we follow the remark 3 in  \cite{IBP} to implement the method, terminate it when $\Vert \{u^{(n)}_k\}-\{u^{(n-1)}_k\}\Vert_F/(1+\Vert \{u^{(n)}_k\} \Vert_F + \Vert \{u^{(n-1)}_k\} \Vert_F)<10^{-8}$ and $\Vert \{v^{(n)}_k\}-\{v^{(n-1)}_k\}\Vert_F/(1+\Vert \{v^{(n)}_k\} \Vert_F + \Vert \{v^{(n-1)}_k\} \Vert_F)<10^{-8}$,
and choose the regularization parameter $\epsilon$ from $\{0.1,0.01,0.001\}$ in our experiments.
For BADMM and IBP, we implement the Matlab codes\footnote{Available in  \href{https://github.com/bobye/WBC_Matlab}{https://github.com/bobye/WBC\_Matlab}} by J.Ye et al. \cite{BADMM} and set the maximum iterate number respectively $4000$ and $10^5$.


\vspace{-5pt}

\begin{figure}[htbp]
    \centering    \includegraphics[width=1\linewidth]{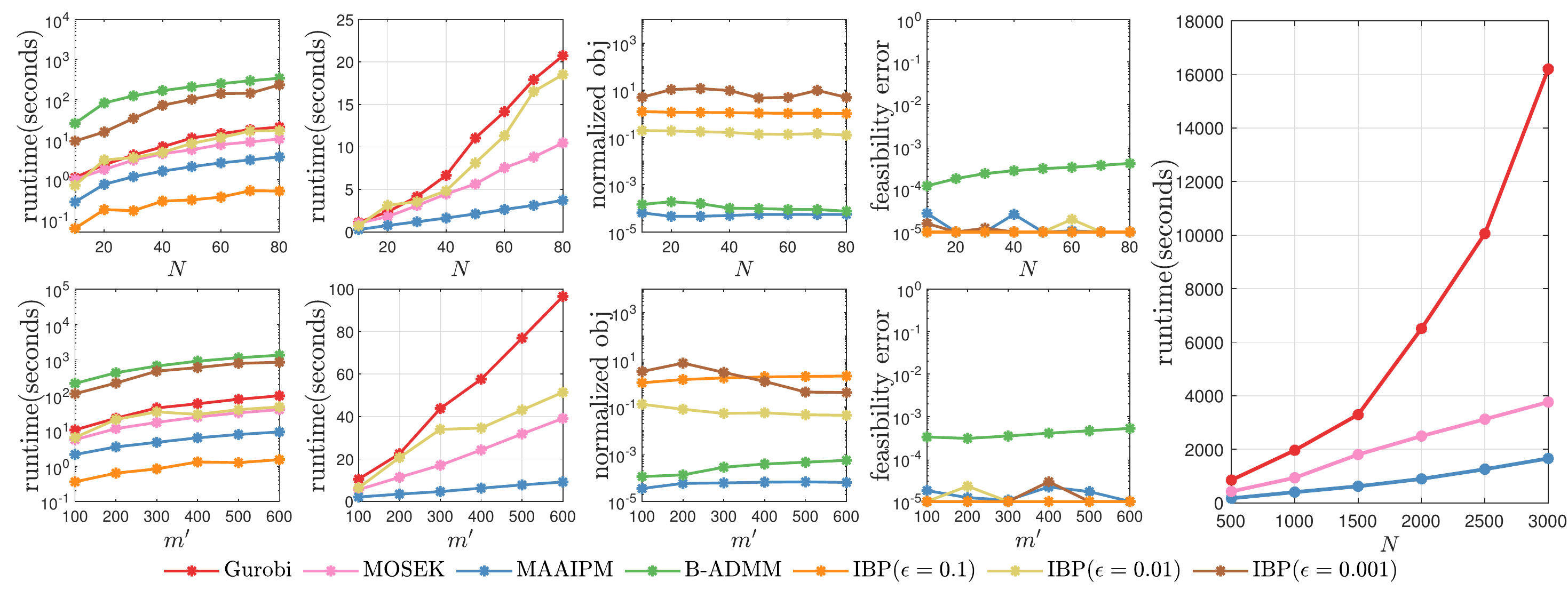}
    \caption{The left 8 figures are the average computation time, normalized objective value and feasibility error of Gurobi, MOSEK, MAAIPM, BADMM and IBP($\epsilon = 0.1,~ 0.01,~ 0.001$) in pre-specified support cases from 30 independent trials. In the first row, $m = 100$, $m_t$ follows an uniform distribution on $(75,125)$. In the second row, $N = 50,~ m =100$ and $m_1=\cdots=m_N=m'$. The right figure is the average computation time of Gurobi and MAAIPM in pre-specified support cases from 10 independent trials. $m_t$ follows a uniform distribution on $(150,250)$, and $m=200$. }
    \label{fig:expNmt}
    \vspace{-5pt}
\end{figure}

\vspace{-5pt}

From the left 8 sub-figures in figure \ref{fig:expNmt} one can observe that MAAIPM returns a considerably accurate solution in the second shortest computation time.
For IBP, although it returns an objective value in the shortest time when $\epsilon=0.1$, the quality of the solution is almost the worst.
Because IBP only solves an approximate problem, if $\epsilon$ is set smaller, the computation time sharply increases but the quality of the solution is still not ensured.
For BADMM, it gives a solution close to the exact one, but requires much more computation time.

\begin{wrapfigure}[18]{r}{0.54\linewidth}
\vspace{-10pt}
    \centering
    \includegraphics[width=1\linewidth]{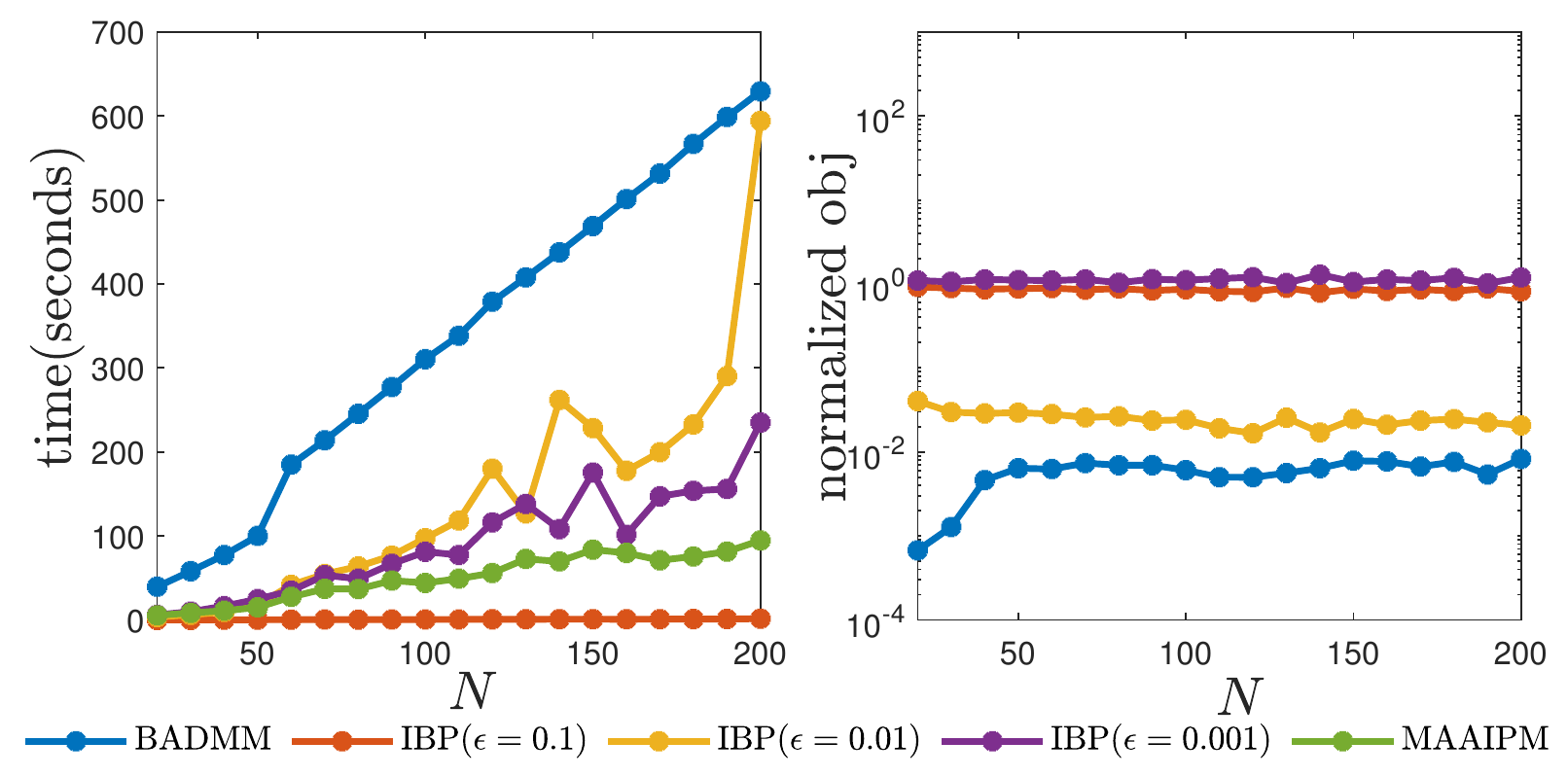}
    \caption{computation time and normalized objective value of MAAIPM, BADMM and IBP in the free support cases from $30$ independent trials. "Normalized obj" denote $\mathcal{F}_{method}/\mathcal{F}_{MAAIPM}-1$, where $\mathcal{F}_{method}$ is the objective value obtained by each method. $N$ takes different values and $m=m'=50$.}
    \label{fig:unfixsupp}
\end{wrapfigure}

For Gurobi and MOSEK, although they can exploit 16 threads, the computation time is far more than that of MAAIPM
That is to say, MAAIPM also largely outperforms sGS-ADMM in speed, according to table 1, 2, 3 in \cite{sGS-ADMM}.
Moreover, because the number of iterations remains almost independent of the problem size, the main computational cost of MAAIPM is approximately linear with respect to $N$ and $m'$. In fact, when $N=5000$, MAAIPM requires only 3098.23 seconds, while MOSEK uses over 20000 seconds. Although the memory usage of MAAIPM is within a constant multiple of that of IBP, the former one is ususaly larger than the latter one. But the right sub-figure in Figure \ref{fig:expNmt} and the case of $N=5000$  demonstrate that MAAIPM's memory usage is managed more efficient compared to Gurobi and MOSEK.
These positive traits are consistent with the time and memory complexity proved in Theorem \ref{cost}.

\begin{wrapfigure}[16]{r}{0.3\linewidth}
\vspace{-10pt}
\includegraphics[width = 1\linewidth]{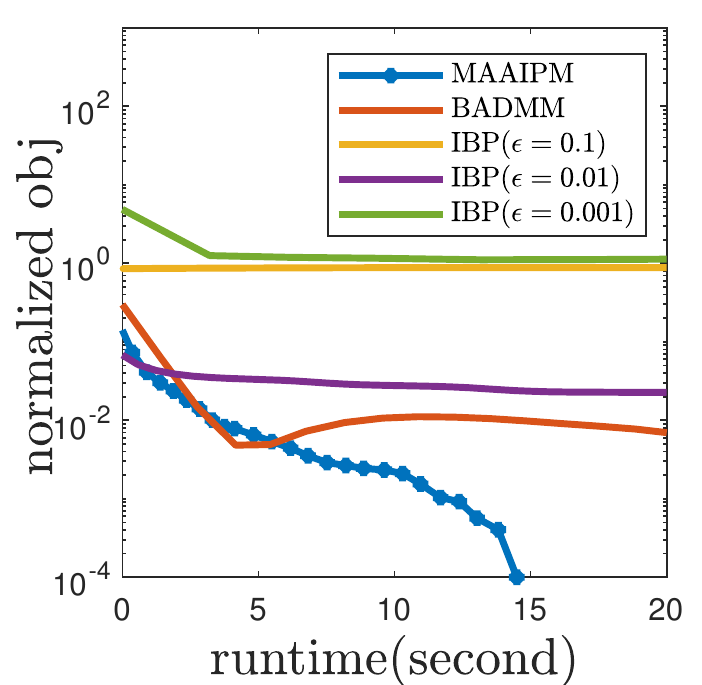}
\caption{Performance of methods in free support cases. $N = 40$, $m=m_1=m_2=\dots=m_N=50$. }
\label{fig:suppadvantage}
\end{wrapfigure}

Next, we conduct numerical studies to test MAAIPM in free support cases, i.e., problem \eqref{non-fix}.
Same as \cite{BADMM}, we implement the version of BADMM and IBP that can automatically update support points and set the initial support points in multivariate normal distribution.
We set the maximum number of iterations in BADMM and IBP as $10^4$ and $10^6$.
The entries of $(\boldsymbol{q}_1^{(t)},\ldots,\boldsymbol{q}_{m'}^{(t)})$ are generated with i.i.d. uniform distribution in $(0,100)$ and the initial support points follows a Gaussian distribution.
In figure \ref{fig:suppadvantage}, "Normalized obj" denotes $\mathcal{F}_{method}/\mathcal{F}_{MAAIPM}-1$, where $\mathcal{F}_{method}$ is the objective value obtained by each iteration of methods.
From figure \ref{fig:unfixsupp} and \ref{fig:suppadvantage}, one can see that, in the free support cases, MAAIPM can still obtain the smallest objective value in the second shortest time.  That is because MAAIPM updates support more frequently and adopts "jump" tricks to avoid the local minima.
Although IBP can obtain an approximate value in the shortest time when $\epsilon = 0.1$, the quality of the barycenter is too low to be useful.


\begin{wraptable}[14]{r}{0.65\linewidth}
        \caption{Experiments on datasets}
        \label{tbl:mnist}
        \centering
        \begin{tabular}{cM{7mm}M{7mm}M{7mm}M{7mm}M{7mm}M{7mm}}
           \toprule
            \multicolumn{1}{c|}{}&\multicolumn{3}{c|}{MNIST}&
            \multicolumn{3}{c}{Fashion-MNIST} \\
            \midrule
            \multicolumn{1}{c|}{time(seconds)} & 250 & 500 & \multicolumn{1}{c|}{1000} & 25  & 50& 75\\
            \midrule
            MAAIPM & \includegraphics[width=1\linewidth]{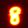} & \includegraphics[width=1\linewidth]{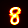} & \includegraphics[width=1\linewidth]{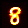} &
            \includegraphics[width=1\linewidth]{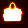}&
            \includegraphics[width=1\linewidth]{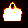}&
            \includegraphics[width=1\linewidth]{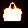}\\
            BADMM & \includegraphics[width=1\linewidth]{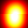} & \includegraphics[width=1\linewidth]{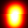} & \includegraphics[width=1\linewidth]{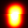} &
            \includegraphics[width=1\linewidth]{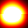}&
            \includegraphics[width=1\linewidth]{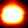}&
            \includegraphics[width=1\linewidth]{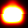}\\
            IBP($\epsilon=0.01$) & \includegraphics[width=1\linewidth]{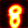} & \includegraphics[width=1\linewidth]{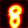} & \includegraphics[width=1\linewidth]{IBP443.png} &
            \includegraphics[width=1\linewidth]{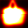}&
            \includegraphics[width=1\linewidth]{IBP23.png}&
            \includegraphics[width=1\linewidth]{IBP23.png}\\
            \bottomrule
        \end{tabular}
    \end{wraptable}

\paragraph{Experiments on real applications:} We conduct similar experiments to  \cite{Cuturi-2014-OTbarycenter,sGS-ADMM} on the MNIST\footnotemark[4] and Fashion-MNIST\footnotemark[4] \footnotetext[4]{Available in \href{http://yann.lecun.com/exdb/mnist/}{http://yann.lecun.com/exdb/mnist/} and \href{https://github.com/zalandoresearch/fashion-mnist}{https://github.com/zalandoresearch/fashion-mnist}} datasets. In MNIST, We randomly select 200 images for digit 8 and resize each image to 0.5, 1, 2 times of its original size $28 \times 28$. In Fashion-MNIST, we randomly select 20 images of handbag, and resize each image to 0.5, 1 time of the original size. {The support points of images are dense and different.} Next, for each case, we apply MAAIPM, BADMM and IBP($\epsilon = 0.01$) to compute the Wasserstein barycenter in respectively free support cases and pre-specified support cases.
From table \ref{tbl:mnist}, one can see that, MAAIPM obtained the clearest and sharpest barycenters within the least computation time. 

\subsubsection*{Acknowledgments} We thank Tianyi Lin, Simai He, Bo Jiang, Qi Deng and Yibo Zeng for helpful discussions and fruitful suggestions.

{
\small
\bibliographystyle{plain}

\begin{thebibliography}{}

\end{thebibliography}


\begin{thebibliography}{9}

%
\bibitem{robust}
	S.S. Abadeh, V.A. Nguyen, D. Kuhn, and P.M.M Esfahani.
	\newblock Wasserstein distributionally robust kalman filtering. In \textit{Advances in Neural Information Processing Systems 31}, pages 8483–8492, 2018.
	
\bibitem{Agueh-2011}
	M. Agueh and G. Carlier.
	\newblock Barycenters in the Wasserstein space. \textit{SIAM Journal on Mathematical Analysis}, 43(2): 904–924, 2011.
	
\bibitem{nips2017-near linear}
	J. Altschuler, J. Weed, and P. Rigollet.
	\newblock Near-linear time approximation algorithms for optimal transport via Sinkhorn iteration. In \textit{Advances in Neural Information Processing Systems 30}, pages 1964–1974, 2017.
	
\bibitem{fixed point method}
	P.C. Alvarez-Esteban, E. Barrio, J. Cuesta-Albertos, and C. Matran.
	\newblock A fixed-point approach to barycenters in Wasserstein space. \textit{Journal of Mathematical Analysis and Applications}, 441(2): 744–762, 2016.

\bibitem{sharpen technique}
	S. Amari, R. Karakida, M.Oizumi and M. Cuturi
	\newblock Information geometry for regularized optimal transport and barycenters of patterns. \textit{Neural computation}, 31(5): 827-848, 2019.
	
	
\bibitem{LP-formulation}
	E. Anderes, S. Borgwardt, and J. Miller.
	\newblock Discrete Wasserstein barycenters: Optimal transport for discrete data. \textit{Math Meth Oper Res},84(2):389–409,October2016. ISSN 1432-2994, 1432-5217. doi: 10.1007/s00186-016-0549-x.
	
\bibitem{Ads}
	M. Arjovsky, S. Chintala, and L. Bottou.
	\newblock Wasserstein generative adversarial networks. In \textit{International Conference on Machine Learning}, pages 214–223, 2017.
	
\bibitem{LPbook}
	D. Bertsimas and J.N. Tsitsiklis.
	\newblock \textit{Introduction to Linear Optimization}. Athena Scientific, 1997.
	
\bibitem{IBP}
    J.D. Benamou, G. Carlier, M. Cuturi, L. Nenna, and G. Peyr$\acute e$.
    \newblock Iterative Bregman projections for regularized transportation problems. \textit{SIAM Journal on Scientific Computing}, 37(2):A1111–A1138, 2015.


\bibitem{optimal complexity}
	J. Blanchet, A. Jambulapati, C. Kent and A. Sidford.
	\newblock Towards optimal running times for optimal transport. \textit{arXiv:1810.07717}.
	
\bibitem{matching}
    G. Carlier, A. Oberman, and E. Oudet.
    \newblock Numerical methods for matching for teams and Wasserstein barycenters. \textit{ESAIM: Mathematical Modelling and Numerical Analysis}, 49(6):1621–1642, 2015.

\bibitem{Stochastic WB}
	S. Claici, E. Chien, J. Solomon.
    	\newblock Stochastic Wasserstein Barycenters. \textit{arXiv:1802.05757}.

\bibitem{domain adaption}
	N. Courty, R. Flamary, A. Habrard, and A. Rakotomamonjy.
	\newblock Joint distribution optimal transportation for domain adaptation. In \textit{Advances in Neural Information Processing Systems 30}, pages 3730–3739. 2017.

\bibitem{Cuturi-2013-OT}
	M. Cuturi.
	\newblock Sinkhorn distances: Lightspeed computation of optimal transport. In \textit{Advances in Neural Information Processing Systems 26}, pages 2292–2300, 2013.

\bibitem{soft clustering}
    A. Dessein, N. Papadakis, and C.A. Deledalle.
    \newblock Parameter estimation in finite mixture models by regularized optimal transport: a unified framework for hard and soft clustering. \textit{arXiv:1711.04366}, 2017.

\bibitem{Cuturi-2014-OTbarycenter}	
	M. Cuturi and A. Doucet.
	\newblock Fast computation of Wasserstein barycenters. In \textit{the 31st International Conference on Machine Learning}, pages 685–693, 2014.
	
\bibitem{Decentralize}	
	P. Dvurechenskii, D. Dvinskikh, A. Gasnikov, C. Uribe, and A. Nedich.
	\newblock Decentralize and randomize: Faster algorithm for Wasserstein barycenters. In \textit{Advances in Neural Information Processing Systems 30}, pages 10783–10793, 2018.

\bibitem{AGD is better}
	P. Dvurechensky, A. Gasnikov and A. Kroshnin.
	\newblock Computational Optimal Transport: Complexity by Accelerated Gradient Descent Is Better Than by Sinkhorn’s Algorithm. \textit{Proceedings of the 35th International Conference on Machine Learning}, 80:1367-1376, 2018.

\bibitem{learning with wa loss}
	C. Frogner, C. Zhang, H. Mobahi, M. Araya, and T.A. Poggio.
	\newblock Learning with a Wasserstein loss. In \textit{Advances in Neural Information Processing Systems 28}, pages 2053–2061, 2015.
	
\bibitem{Fast-matrix-multiplication}
	F. L. Gall and F. Urrutia.
	\newblock Improved Rectangular Matrix Multiplication using Powers of the Coppersmith-Winograd Tensor.
	\textit{arXiv:1708.05622}.
	2017
	
	
\bibitem{hp testing}
	R. Gao, L. Xie, Y. Xie, and H. Xu.
	\newblock Robust hypothesis testing using wasserstein uncertainty sets. In \textit{Advances in Neural Information Processing Systems 31}, pages 7913–7923. 2018.

\bibitem{nips2016-stochastic}
	A. Genevay, M. Cuturi, G. Peyr$\acute e$, and F. Bach.
	\newblock Stochastic optimization for large-scale optimal transport. In \textit{Advances in Neural Information Processing Systems 29}, pages 3440–3448, 2016.

\bibitem{GAN}
	I. Gulrajani, F. Ahmed, M. Arjovsky, V. Dumoulin, and A.C. Courville.
	\newblock Improved training of Wasserstein GANs. In \textit{Advances in Neural Information Processing Systems 30}, pages 5767–5777, 2017.

\bibitem{Gurobi}
	Inc. Gurobi Optimization.
	\newblock \textit{Gurobi Optimizer Reference Manual}, 2018.
	
\bibitem{Cluster}
	N. Ho, V. Huynh, D. Phung, and M.I. Jordan.
	\newblock Probabilistic multilevel clustering via composite transportation distance. \textit{arXiv:1810.11911}, 2018.

\bibitem{Ho-Cluster2}
    N. Ho, X. Nguyen, M. Yurochkin, H.H. Bui, V. Huynh, and D. Phung.
    \newblock Multilevel clustering via wasserstein means. In \textit{International Conference on Machine Learning}, pages 1501–1509, 2017.

\bibitem{supervised word mover}
	G. Huang, C. Guo, M.J. Kusner, Y. Sun, F. Sha, and K.Q.  Weinberger.
	\newblock Supervised word mover’s distance. In \textit{Advances in Neural Information Processing Systems 29}, pages 4862– 4870, 2016.
	
\bibitem{imple warm}
    E. John, E. A. Yıldırım.
    \newblock Implementation of warm-start strategies in interior-point methods for linear programming in fixed dimension.
    \textit{Computational Optimization and Applications}, 41(2): 151-183, 2008.

\bibitem{warm start}
    E. A. Yıldırım, S. J. Wright
    \newblock Warm-start strategies in interior-point methods for linear programming.
    \textit{SIAM Journal on Optimization}, 12(3): 782-810, 2002.
	
	
\bibitem{word embedding}
	M.J. Kusner, Y. Sun, N.I. Kolkin, and K.Q. Weinberger.
	\newblock From word embeddings to document distances. In \textit{the 32nd  In International Conference on Machine Learning}, pages 957–966, 2015.

	
\bibitem{persistence diagrams}
	T. Lacombe, M. Cuturi, and S. Oudot.
	\newblock Large scale computation of means and clusters for persistence diagrams using optimal transport. In \textit{Advances in Neural Information Processing Systems 31}, pages 9792–9802, 2018.
	
\bibitem{minimax learning}
	J. Lee and M. Raginsky.
	\newblock Minimax statistical learning with wasserstein distances. In  \textit{Advances in Neural Information Processing Systems 31}, pages 2692–2701. 2018.
	
\bibitem{Lin-2019} 
	T. Lin, N. Ho and M.I. Jordan.
	\newblock On Efficient Optimal Transport: An Analysis of Greedy and Accelerated Mirror Descent Algorithms. \textit{arXiv:1901.06482.}

\bibitem{gaussian process}
	A. Mallasto and A. Feragen.
	\newblock Learning from uncertain curves: The 2-wasserstein metric for gaussian processes. In \textit{Advances in Neural Information Processing Systems 30}, pages 5660–5670. 2017.

\bibitem{pdipm}
	S. Mehrotra.
	\newblock On the Implementation of a Primal-Dual interior-point Method. \textit{SIAM J. Optim.}, 2(4), 575–601. 1992.
	
\bibitem{Mizuno-Todd-Ye}
    S. Mizuno, M. J. Todd and Y. Ye.
    \newblock On adaptive-step primal-dual interior-point algorithms for linear programming. \textit{Mathematics of Operations research}, 18(4): 964-981, 1993.

\bibitem{point embedding}
	B. Muzellec and M. Cuturi.
	\newblock Generalizing point embeddings using the wasserstein space of elliptical distributions. In S. Bengio, H. Wallach, H. Larochelle, K. Grauman, N. Cesa- Bianchi, and R. Garnett, editors, \textit{Advances in Neural Information Processing Systems 31}, pages 10258–10269. 2018.

\bibitem{Bayes1}
	X. Nguyen.
	\newblock Convergence of latent mixing measures in finite and infinite mixture models. \textit{Annals of Statistics}, 4(1):370–400, 2013.

\bibitem{Bayes2}
	X. Nguyen.
	\newblock Borrowing strength in hierarchical Bayes: posterior concentration of the Dirichlet base measure. \textit{Bernoulli}, 22(3):1535–1571, 2016.

\bibitem{book}
    J. Nocedal and S.J. Wright
    \newblock \textit{Numerical Optimization}, 2006.

\bibitem{kernel}
	G. Peyr$\acute e$, M. Cuturi, and J. Solomon.
	\newblock Gromov-Wasserstein averaging of kernel and distance matrices. In \textit{International Conference on Machine Learning}, pages 2664–2672, 2016.
	
\bibitem{computation OT book}
	G. Peyr$\acute e$ and M. Cuturi.
	\newblock Computational Optimal Transport. \textit{arXiv:1803.00567}.
	
\bibitem{texture}	
	J. Rabin, G. Peyr$\acute e$, J. Delon, and M. Bernot.
	\newblock Wasserstein barycenter and its application to texture mixing. In \textit{Scale Space and Variational Methods in Computer Vision}, volume 6667 of \textit{Lecture Notes in Computer Science}, pages 435–446. Springer, 2012.

\bibitem{dic learning}	
	A. Rolet, M. Cuturi, and G. Peyr$\acute e$.
	\newblock Fast dictionary learning with a smoothed Wasserstein loss. In \textit{International Conference on Artificial Intelligence and Statistics}, pages 630–638, 2016.
	
\bibitem{Yewarmstart}
    A. Skajaa, E. D. Andersen, Y. Ye
    \newblock Warmstarting the homogeneous and self-dual interior-point method for linear and conic quadratic problems. \textit{Mathematical Programming Computation},  5(1): 1-25, 2013.
	
\bibitem{convolutional sinkhorn}
J. Solomon, G. D. Goes, G. Peyré, M. Cuturi, A. Butscher, A. Nguyen, D. Tao, L. Guibas,
\newblock Convolutional wasserstein distances: Efficient optimal transportation on geometric domains. \textit{ACM Transactions on Graphics (TOG)}, 34(4), 66, 2015.
	
\bibitem{semi-supervised}
	 J. Solomon, R.M. Rustamov, L. Guibas, and A. Butscher.
	 \newblock Wasserstein propagation for semi-supervised learning. In \textit{the 31st International Conference on Machine Learning}, pages 306–314, 2014.
	


\bibitem{Bayes3}	
	S. Srivastava, C. Li, and D. Dunson.
	\newblock Scalable Bayes via barycenter in Wasserstein space. \textit{Journal of Machine Learning Research}, 19(8):1–35, 2018.
	
\bibitem{Parallel}
	M. Staib, S. Claici, J.M. Solomon, and S. Jegelka.
	\newblock Parallel streaming Wasserstein barycenters. In \textit{Advances in Neural Information Processing Systems 30}, pages 2647–2658, 2017a.

\bibitem{Optimal Transport book}
    C. Villani.
    \newblock \textit{Optimal Transport: Old and New}, volume 338. Springer Science \& Business Media, 2008.


\bibitem{nips-BADMM}
	H. Wang and A. Banerjee.
	\newblock Bregman Alternating Direction Method of Multipliers. In \textit{ Advances in Neural Information Processing Systems 27}, 2014, pp. 2816-2824.
	
\bibitem{PrimalDualIPM}
	 S.J. Wright.
	\newblock \textit{Primal-dual interior-point methods}, 1997.
	
\bibitem{sGS-ADMM}
    L. Yang, J. Li, D. Sun and K.C. Toh.
    \newblock A Fast Globally Linearly Convergent Algorithm for the Computation of Wasserstein Barycenters, \textit{arXiv:1809.04249}, 2018.

\bibitem{BADMM}
    J. Ye, P. Wu, J.Z. Wang, and J. Li.
    \newblock Fast discrete distribution clustering using Wasserstein barycenter with sparse support. \textit{IEEE Transactions on Signal Processing}, 65(9):2317–2332, 2017.
	
\bibitem{Ye-nonconvex}
    Y Ye.
    \newblock On affine scaling algorithms for nonconvex quadratic programming. \textit{Mathematical Programming}, 56(1-3): 285-300, 1992

\bibitem{Ye1993}
    Y. Ye, O. G{\"u}ler, R. A. Tapia, and Y. Zhang.
	\newblock A quadratically convergent O ($\sqrt n  L$)-iteration algorithm for linear programming.  \textit{ Mathematical Programming }, 59(1):151-162 2014.
	

\end{thebibliography}

}

\appendix
\section{Proof of lemma \ref{precondition-lemma}}
To justify the claims in lemma  \ref{precondition-lemma}, we follows the following line of proof: a) First, we show that through a series of row transformations, we can transform matrix A into a matrix whose elements in $(M+1)$-th, $(M+m+1)$-th, $\cdots$, $(M+(N-1)m+1)$-th rows  are zeros, and elements in other positions are the same as A. b) Second, we prove that the matrix $\bar A$ has full row rank.

a). From the definition of matrix A, we have
\begin{equation}\label{matrix A second}
A=
\begin{bmatrix}
F_1                              &                                 &                &                  &   \\
                                 &F_2                              &                &                   &   \\
                                 &                                  &\ddots          &                  &  \\
                                 &                                  &                & F_N &  \\
            G_1 &                     &               &                   & -I_m \\
                                 &G_2                                &                &                  & -I_m \\
                                 &                                   &\ddots           &                &  \vdots\\
                                 &                                   &              & G_N        & -I_m \\
                                 &                                   &                 &                      &  \boldsymbol{1}_m^T \\
\end{bmatrix}
\end{equation}

where $ F_i = I_{m_i}\otimes \boldsymbol{1}_m^{\top} $, $ G_i = \boldsymbol{1}_{m_i}^{\top}\otimes I_m $ for $i=1,...,N$.

Let
\begin{equation*}
  \boldsymbol{e}_1 = \begin{bmatrix} 1 \\ 0 \\ \vdots \\ 0 \end{bmatrix}_{m\times 1}, \ \
  T_i =
  \begin{bmatrix}
  1 & 1 & \cdots & 1 \\ & & & \\ & & & \\ & & & \\
  \end{bmatrix}_{m\times m_i},
   \ \ \
   S_i = \begin{bmatrix}
  1 & 1 & \cdots & 1 \\ & 1 & & \\ & & \ddots & \\ & & & 1 \\
  \end{bmatrix}_{m\times m}, \ i=1,...,N
\end{equation*}
and
\begin{equation*}
  L_1 =
  \begin{bmatrix}
I_{m_1}   &     &      &        &   &  &  \\
         &\ddots   &       &          &  & &  \\
    &      &I_{m_N}   &    & & & \\
  -T_1 &   &    &  I_m     & & & \\
  & \ddots   &     &     & \ddots & & \\
    &        & -T_N  &    & & I_m  &  \\
     &       &      &     & & & 1  \\
\end{bmatrix}, \ \
L_2 =
\begin{bmatrix}
I_{m_1}   &     &      &        &   &  &  \\
         &\ddots   &       &          &  & &  \\
    &      &I_{m_N}   &    & & & \\
   &   &    &  S_1     & & &\boldsymbol{e}_1 \\
  &    &     &     & \ddots & & \vdots \\
    &        &   &    & & S_N  & \boldsymbol{e}_1 \\
     &       &      &     & & & 1  \\
\end{bmatrix}
\end{equation*}

Then
\begin{equation*}
  L_2 L_1 A =
\begin{bmatrix}
F_1                              &                                 &                &                  &   \\
                                 &F_2                              &                &                   &   \\
                                 &                                  &\ddots          &                  &  \\
                                 &                                  &                & F_N &  \\
            G_1^{(1)} &                     &               &                   & H^{(1)} \\
                                 &G_2^{(1)}                                &                &                  & H^{(1)} \\
                                 &                                   &\ddots           &                &  \vdots\\
                                 &                                   &              & G_N^{(1)}        & H^{(1)} \\
                                 &                                   &                 &                      &  \boldsymbol{1}_m^T \\
\end{bmatrix},
\end{equation*}
where $G_i^{(1)} = S_i G_i - S_i T_i F_i $, $ H^{(1)} = \boldsymbol{e}_1 \boldsymbol{1}_m^{\top} -S_i  $. It is easy to verify that elements in the first rows of $H^{(1)}$ and $G_i^{(1)}, i=1,...,N$ are zeros. We have proved the claims in a).

b). As defined in the claims of lemma \ref{precondition-lemma}, $\bar A$ is obtained by removing the $(M+1)$-th, $(M+m+1)$-th, $\cdots$, $(M+(N-1)m+1)$-th rows of A. That is,

$$
\bar A =
  \begin{bmatrix}
F_1                              &                                 &                &                  &   \\
                                 &F_2                              &                &                   &   \\
                                 &                                  &\ddots          &                  &  \\
                                 &                                  &                & F_N &  \\
            G_1^{(2)} &                     &               &                   & H^{(2)} \\
                                 &G_2^{(2)}                                &                &                  & H^{(2)} \\
                                 &                                   &\ddots           &                &  \vdots\\
                                 &                                   &              & G_N^{(2)}        & H^{(2)} \\
                                 &                                   &                 &                      &  \boldsymbol{1}_m^{\top} \\
\end{bmatrix}
$$

where $ G_i^{(2)} = G_i^{(1)}(2:m,:) = \boldsymbol{1}_{m_i}^{\top} \otimes[\boldsymbol{0}_{m-1}, I_{m-1}] $, $H^{(2)} = H^{(1)}(2:m,:) = [\boldsymbol{0}_{m-1}, -I_{m-1}]$ and $ F_i = I_{m_i}\otimes \boldsymbol{1}_m^{\top} $. Let $n_{row}' = M+N(m-1)+1$.

For $i = 1,...,N$, let
\begin{equation*}
  U_i = I_{m_i} \otimes \begin{bmatrix} 1 & -1 & \cdots & -1 \\ & 1 & & \\ & & \ddots & \\ & & & 1 \end{bmatrix}_{m \times m} ,  \
  U_{N+1} = \begin{bmatrix} 1 & -1 & \cdots & -1 \\ & 1 & & \\ & & \ddots & \\ & & & 1 \end{bmatrix}_{m \times m}, \
  R_1 = \begin{bmatrix} U_1 & & & \\ & U_2 & & \\ & & \ddots & \\ & & & U_{N+1} \end{bmatrix}
\end{equation*}
%


then

$$
\bar A R_1 =
  \begin{bmatrix}
F_1^{(3)}                              &                                 &                &                  &   \\
                                 &F_2^{(3)}                              &                &                   &   \\
                                 &                                  &\ddots          &                  &  \\
                                 &                                  &                & F_N^{(3)} &  \\
            G_1^{(3)} &                     &               &                   & H^{(3)} \\
                                 &G_2^{(3)}                                &                &                  & H^{(3)} \\
                                 &                                   &\ddots           &                &  \vdots\\
                                 &                                   &              & G_N^{(3)}        & H^{(3)} \\
                                 &                                   &                 &                      &  \boldsymbol{\alpha}^{\top} \\
\end{bmatrix}
$$
where $ F_i^{(3)} = F_i U_i = I_{m_i}\otimes [1,\boldsymbol{0}_{m-1}^{\top}]$,
$ G_i^{(3)} = G_i^{(2)} U_i = G_i^{(2)} = \boldsymbol{1}_{m_i}^{\top} \otimes[\boldsymbol{0}_{m-1}, I_{m-1}]$, $i=1,...,N$,
$ H^{(3)} = H^{(2)}U_{N+1} = H^{(2)} = [\boldsymbol{0}_{m-1}, -I_{m-1}]$
and $ \boldsymbol{\alpha}^{\top} = \boldsymbol{1}_m^{\top} U_{N+1} = [1,\boldsymbol{0}_{m-1}^{\top}]$.

Let
\begin{equation*}
  \tilde K = \begin{bmatrix} 0 & & & \\ & -1 & & \\ & & \ddots & \\ & & & -1 \end{bmatrix}_{m\times m}, \
   K_i = \begin{bmatrix} I_m & \tilde K & \cdots & \tilde K \\ & I_m & & \\ & & \ddots & \\ & & & I_m \end{bmatrix}_{mm_i\times mm_i}, \
   R_2 = \begin{bmatrix} K_1 & & & \\ & \ddots & & \\ & & K_N & \\ & & & I_m \end{bmatrix}
\end{equation*}

then
$$
\bar A R_1 R_2 =
  \begin{bmatrix}
F_1^{(4)}                              &                                 &                &                  &   \\
                                 &F_2^{(4)}                              &                &                   &   \\
                                 &                                  &\ddots          &                  &  \\
                                 &                                  &                & F_N^{(4)} &  \\
            G_1^{(4)} &                     &               &                   & H^{(3)} \\
                                 &G_2^{(4)}                                &                &                  & H^{(3)} \\
                                 &                                   &\ddots           &                &  \vdots\\
                                 &                                   &              & G_N^{(4)}        & H^{(3)} \\
                                 &                                   &                 &                      &  \boldsymbol{\alpha}^{\top} \\
\end{bmatrix}
$$

where $ F_i^{(4)} = F_i^{(3)}K_i = F_i^{(3)} = I_{m_i}\otimes [1,\boldsymbol{0}_{m-1}^{\top}]$,
$ G_i^{(4)} = G_i^{(3)} K_i =  [\boldsymbol{0}_{m-1}, I_{m-1}, \boldsymbol{0}_{(m-1)\times (mm_i-m)}]$, $i=1,...,N$,

Let $\tilde A $ be the matrix composing of the first $(mM+1)$ columns of $ \bar A R_1 R_2 $. That is,
$$
\tilde A =
  \begin{bmatrix}
F_1^{(4)}                              &                                 &                &                  &   \\
                                 &F_2^{(4)}                              &                &                   &   \\
                                 &                                  &\ddots          &                  &  \\
                                 &                                  &                & F_N^{(4)} &  \\
            G_1^{(4)} &                     &               &                   &  \\
                                 &G_2^{(4)}                                &                &                  &  \\
                                 &                                   &\ddots           &                &  \\
                                 &                                   &              & G_N^{(4)}        &  \\
                                 &                                   &                 &                      &  1 \\
\end{bmatrix}
$$

Matrix $ \tilde A $ satisfies two properties:

(1) Each row of $ \tilde A $ has one and only one nonzero element (being 1) with other elements being 0;

(2) Each column of $ \tilde A $ has at most one nonzero element.

Therefore, there exists permutation matrices $P_1 \in \R^{n_{row}'}$ and $Q_1\in \R^{n_{col}-m+1}$ such that $ P_1 \tilde A Q_1 = [I_{n_{row}'}, 0_{n_{row}' \times (Mm+1) }] $. Thus $ rank(\tilde A) = rank(P_1 \tilde A Q_1) = n_{row}'$ and $rank(\bar A) = n_{row}'$.

\section{Proof of lemma \ref{structure1}}

In this subsection, we give the proof of lemma \ref{structure1}.

\begin{proof}
Let $\boldsymbol{d}$ be the diagonal vector of matrix $ D $; $M:= \sum_{i=1}^N m_i$ and $M_2:= N(m-1)$. Same as the preceding section, the structure of $\bar A$ as:
 \begin{equation*}
   \bar A =
     \begin{bmatrix}
F_1                              &                                 &                &                  &   \\
                                 &F_2                              &                &                   &   \\
                                 &                                  &\ddots          &                  &  \\
                                 &                                  &                & F_N &  \\
            G_1^{(2)} &                     &               &                   & H^{(2)} \\
                                 &G_2^{(2)}                                &                &                  & H^{(2)} \\
                                 &                                   &\ddots           &                &  \vdots\\
                                 &                                   &              & G_N^{(2)}        & H^{(2)} \\
                                 &                                   &                 &                      &  \boldsymbol{1}_m^{\top} \\
\end{bmatrix}
 \end{equation*}

where $ G_i^{(2)} = G_i^{(1)}(2:m,:) = \boldsymbol{1}_{m_i}^{\top} \otimes[\boldsymbol{0}_{m-1}, I_{m-1}] $, $H^{(2)} = H^{(1)}(2:m,:) = [\boldsymbol{0}_{m-1}, -I_{m-1}]$ and $ F_i = I_{m_i}\otimes \boldsymbol{1}_m^{\top} $.

Let
$$
\bar A_1 := \bar A(1:M,:) =
\begin{bmatrix}
F_1                              &                                 &                &                  &   \\
                                 &F_2                              &                &                   &   \\
                                 &                                  &\ddots          &                  &  \\
                                 &                                  &                & F_N &  \\
\end{bmatrix},
$$
$$
\bar A_2 := \bar A(M+1:M+(m-1)N, :) =
\begin{bmatrix}
          G_1^{(2)} &                     &               &                   & H^{(2)} \\
                                 &G_2^{(2)}                                &                &                  & H^{(2)} \\
                                 &                                   &\ddots           &                &  \vdots\\
                                 &                                   &              & G_N^{(2)}        & H^{(2)} \\
\end{bmatrix},
$$

$$
\bar A_3 := \bar A(M+(m-1)N+1,:)  =
\begin{bmatrix}
                 &                                   &                 &                      &  \boldsymbol{1}_m^{\top} \\
\end{bmatrix}.
$$

Then
$$
\bar A=
\begin{bmatrix}
\bar A_1 \\ \bar A_2 \\ \bar A_3
\end{bmatrix} \
and \
\bar A D \bar A^{\top} =
\begin{bmatrix}
\bar A_1 D \bar A_1^{\top} & \bar A_1 D \bar A_2^{\top} & \bar A_1 D \bar A_3^{\top} \\
\bar A_2 D \bar A_1^{\top} & \bar A_2 D \bar A_2^{\top} & \bar A_2 D \bar A_3^{\top} \\
\bar A_3 D \bar A_1^{\top} & \bar A_3 D \bar A_2^{\top} & \bar A_3 D \bar A_3^{\top} \\
\end{bmatrix}.
$$

Now we analyze the structure of each sub-matrix $\bar A_i D \bar A_j^{\top}$ and rename them for conciseness. Let
$$
D =
\begin{bmatrix}
D_1 & & & \\
   & D_2 & & \\
   & & \ddots & \\
   & & & D_{N+1}
\end{bmatrix},
$$

where $D_i\in \R^{mm_i\times mm_i}$, $i=1,\ldots,N$ and $D_{N+1} \in \R^{m\times m}$. Then
$$
\bar A_1 D \bar A_1^{\top} =
\begin{bmatrix}
F_1 D_1 F_1^{\top} & & \\
 & \ddots & \\
 & & F_N D_N F_N^{\top}
\end{bmatrix}
:= B_1.
$$
Each $F_i D_i F_i^{\top}$ is a diagonal matrix with positive diagonal entries.

$$
\bar A_2 D \bar A_1^{\top} =
\begin{bmatrix}
G_1^{(2)}D_1 F_1^{\top} & & \\
 & \ddots & \\
 & & G_N^{(2)}D_N F_N^{\top}
\end{bmatrix}
:= B_2^{\top},
$$

\begin{equation}\label{A22}
\bar A_2 D \bar A_2^{\top} =
\begin{bmatrix}
G_1^{(2)} D_1 G_1^{(2)\top} & & \\
 & \ddots & \\
 & & G_N^{(2)} D_N G_N^{(2)\top}
\end{bmatrix}
+
\begin{bmatrix}
H^{(2)} D_{N+1} H^{(2)\top} & \cdots & H^{(2)} D_{N+1} H^{(2)\top} \\
\vdots & & \vdots \\
H^{(2)} D_{N+1} H^{(2)\top} & \cdots & H^{(2)} D_{N+1} H^{(2)\top}
\end{bmatrix}.
\end{equation}

where $H^{(2)} D_{N+1} H^{(2)\top}$ and each $G_i^{(2)} D_i G_i^{(2)\top}$ is a diagonal matrix with positive diagonal entries. We use $B_3 $ to denote the first matrix in the right hand side of (\ref{A22}) and $B_4$ to denote the second. In addition, other blocks of $\bar A D \bar A^{\top}$ are

$$
\bar A_3 D \bar A_1^{\top} = 0,
$$

$$
\bar A_3 D \bar A_2^{\top} =
\begin{bmatrix}
\boldsymbol{1}_m^{\top} D_{N+1} H^{(2)\top} & \cdots & \boldsymbol{1}_m^{\top} D_{N+1} H^{(2)\top}
\end{bmatrix}
:= \boldsymbol{\alpha}^{\top},
$$

$$
\bar A_3 D \bar A_3^{\top} = \boldsymbol{1}_m^{\top} D_{N+1} \boldsymbol{1}_m := c.
$$

With the new notations, we have
$$
\bar A D \bar A^T =
\begin{bmatrix}
B_1  &  B_2  &  \boldsymbol{0} \\
B_2^{\top} & B_3 + B_4 & \boldsymbol{\alpha} \\
\boldsymbol{0} & \boldsymbol{\alpha}^{\top} & c
\end{bmatrix}.
$$

\end{proof}

\section{Proof of lemma \ref{basic2}}
To justify lemma \ref{basic2}, we need the following basic result which can be verified through direct computation.

\begin{lemma}\label{basic1}
 All the non-zero entries of matrices $B_1,B_2,B_3$ and $B_4$ are positive, and

\textbf{a)}  $B_3 \boldsymbol{1}_{M_2}= B_2^{\top} \boldsymbol{1}_M$. \quad
\textbf{b)}  $ B_1\boldsymbol{1}_M - B_2 \boldsymbol{1}_{M_2} >0 $.
\end{lemma}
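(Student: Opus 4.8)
\emph{Proof proposal.} The plan is to prove all three assertions by a direct computation from the explicit block/Kronecker expressions obtained in the proof of Lemma~\ref{structure1}: $B_1=\mathrm{diag}(F_iD_iF_i^{\top})$, $B_2^{\top}=\mathrm{diag}(G_i^{(2)}D_iF_i^{\top})$, $B_3=\mathrm{diag}(G_i^{(2)}D_iG_i^{(2)\top})$ and $B_4=(\boldsymbol{1}_N\boldsymbol{1}_N^{\top})\otimes(H^{(2)}D_{N+1}H^{(2)\top})$, where $F_i=I_{m_i}\otimes\boldsymbol{1}_m^{\top}$, $G_i^{(2)}=\boldsymbol{1}_{m_i}^{\top}\otimes[\boldsymbol{0}_{m-1},I_{m-1}]$, $H^{(2)}=[\boldsymbol{0}_{m-1},-I_{m-1}]$, and $D=\mathrm{diag}(D_1,\dots,D_N,D_{N+1})$ is positive diagonal with $D_i\in\R^{mm_i\times mm_i}$. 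For the positivity statement I would argue that $F_i$ and $G_i^{(2)}$ are $\{0,1\}$-matrices while $D_i$ is a positive diagonal matrix, so each diagonal block of $B_1$, $B_2$, $B_3$ is a product of entrywise-nonnegative matrices; hence these three matrices have nonnegative entries and every nonzero entry is strictly positive. For $B_4$, the two sign flips in $H^{(2)}D_{N+1}H^{(2)\top}$ cancel, so $H^{(2)}D_{N+1}H^{(2)\top}$ is just the trailing $(m-1)\times(m-1)$ diagonal block of $D_{N+1}$, a positive diagonal matrix; therefore $B_4$ is $\boldsymbol{1}_N\boldsymbol{1}_N^{\top}$ tensored with a positive diagonal matrix and again has only positive nonzero entries.

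For part (a), since $B_3$ and $B_2^{\top}$ are block diagonal with the same partition into $N$ blocks, it suffices to verify, for each $i$, that $G_i^{(2)}D_i\bigl(F_i^{\top}\boldsymbol{1}_{m_i}-G_i^{(2)\top}\boldsymbol{1}_{m-1}\bigr)=\boldsymbol{0}$. A short Kronecker computation gives $F_i^{\top}\boldsymbol{1}_{m_i}=\boldsymbol{1}_{m_i}\otimes\boldsymbol{1}_m$ and $G_i^{(2)\top}\boldsymbol{1}_{m-1}=\boldsymbol{1}_{m_i}\otimes(\boldsymbol{1}_m-\boldsymbol{e}_1)$, so their difference is $\boldsymbol{1}_{m_i}\otimes\boldsymbol{e}_1$, which is supported exactly on the coordinates $(j-1)m+1$, $j=1,\dots,m_i$ (the entries of $\mathrm{vec}(\Pi^{(i)})$ coming from the first row of $\Pi^{(i)}$). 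But the first column within each length-$m$ block of $G_i^{(2)}$ is zero — this is precisely the effect of the preconditioning in Lemma~\ref{precondition-lemma}, which removed from each $G_i$ the unique row whose support met those columns — so $G_i^{(2)}D_i$ annihilates $\boldsymbol{1}_{m_i}\otimes\boldsymbol{e}_1$, which proves (a).

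For part (b), $B_1$ and $B_2$ share the same block-diagonal partition, with $B_2=\mathrm{diag}(F_iD_iG_i^{(2)\top})$, so it is enough to show $F_iD_i\bigl(F_i^{\top}\boldsymbol{1}_{m_i}-G_i^{(2)\top}\boldsymbol{1}_{m-1}\bigr)>\boldsymbol{0}$ for every $i$. By the computation just made this vector equals $F_iD_i(\boldsymbol{1}_{m_i}\otimes\boldsymbol{e}_1)$; scaling by $D_i$ produces a vector whose only nonzero entries are the diagonal entries of $D$ at positions $(j-1)m+1$, and then applying $F_i=I_{m_i}\otimes\boldsymbol{1}_m^{\top}$ sums over each length-$m$ block, so the $j$-th output coordinate equals that single, strictly positive diagonal entry of $D$. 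Hence $B_1\boldsymbol{1}_M-B_2\boldsymbol{1}_{M_2}$ is a vector of diagonal entries of $D$, all of which are positive.

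I do not anticipate a genuine obstacle: the entire argument is elementary Kronecker-product algebra once the blocks are written out. The only part requiring care is the index bookkeeping — keeping straight the column-major convention for $\mathrm{vec}(\cdot)$, the block ordering of $D$, and the fact (from Lemma~\ref{precondition-lemma}) that the preconditioning deletes exactly the first row of each $G_i$ — so that ``the first slot of each length-$m$ block'' is identified correctly in both (a) and (b); any index slip there would break the key cancellations.
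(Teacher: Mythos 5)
Your proposal is correct and follows essentially the same route as the paper: block-by-block reduction using the explicit Kronecker forms of $F_i$, $G_i^{(2)}$, $H^{(2)}$, with (a) and (b) coming from comparing $F_i^{\top}\boldsymbol{1}_{m_i}$ against $G_i^{(2)\top}\boldsymbol{1}_{m-1}$. The only difference is that you make explicit the computation the paper leaves as ``easy to verify,'' namely that the difference equals $\boldsymbol{1}_{m_i}\otimes\boldsymbol{e}_1$, which $G_i^{(2)}D_i$ annihilates and $F_iD_i$ maps to a vector of positive diagonal entries of $D$.
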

\begin{proof}
a)
$$
B_3 \boldsymbol{1}_{M_2} =
\begin{bmatrix}
G_1^{(2)} D_1 G_1^{(2)\top} \boldsymbol{1}_{m-1} \\
\vdots \\
G_N^{(2)} D_N G_N^{(2)\top} \boldsymbol{1}_{m-1}
\end{bmatrix}, \ \
B_2^{\top} \boldsymbol{1}_M =
\begin{bmatrix}
G_1^{(2)}D_1 F_1^{\top} \boldsymbol{1}_{m_1} \\
\vdots \\
G_N^{(2)}D_N F_N^{\top} \boldsymbol{1}_{m_N}
\end{bmatrix}
$$
Recall that $ G_i^{(2)}  = \boldsymbol{1}_{m_i}^{\top} \otimes[\boldsymbol{0}_{m-1}, I_{m-1}] $, $F_i = I_{m_i}\otimes \boldsymbol{1}_m^{\top}$, and $D_i$'s are diagonal matrices, we have $G_i^{(2)}D_i F_i^{\top} \boldsymbol{1}_{m_i} =  G_i^{(2)} D_i G_i^{(2)\top} \boldsymbol{1}_{m-1}$ and thus $B_3 \boldsymbol{1}_{M_2}= B_2^{\top} \boldsymbol{1}_M$.

b)
$$
B_1\boldsymbol{1}_M =
\begin{bmatrix}
F_1 D_1 F_1^{\top} \boldsymbol{1}_{m_1} \\
\vdots \\
F_N D_N F_N^{\top} \boldsymbol{1}_{m_N}
\end{bmatrix}, \ \
B_2 \boldsymbol{1}_{M_2} =
\begin{bmatrix}
F_1 D_1 G_1^{(2)\top} \boldsymbol{1}_{m-1} \\
\vdots \\
F_N D_N G_N^{(2)\top} \boldsymbol{1}_{m-1}
\end{bmatrix}
$$
It is easy to verify that $F_i D_i F_i^{\top} \boldsymbol{1}_{m_i} > F_i D_i G_i^{(2)\top} \boldsymbol{1}_{m-1}$ and thus $ B_1\boldsymbol{1}_M - B_2 \boldsymbol{1}_{M_2} >0 $.
\end{proof}
With this basic lemma at hand, we are able to prove lemma \ref{basic2}.

\textbf{proof of lemma \ref{basic2}: }
\begin{proof}

\textbf{a)} It is easy to verify the block-diagonal structure of $A_1$, so we just need to prove the positive definiteness and the strict diagonal dominance.
Assume $A_1$ is not positive definite and $-\lambda\le 0 $ is an eigenvalue of $A_1$, then $ \lambda I_{M_2}+A_1 $ is a singular matrix.

From the results in lemma \ref{basic1}, we have
\begin{eqnarray}\label{vec ieq}
& & (\lambda I_{M_2} +A_1)\boldsymbol{1}_{M_2}\nonumber\\
&=&\lambda\boldsymbol{1}_{M_2}+ B_3\boldsymbol{1}_{M_2} - (B_2^{\top} B_1^{-1} B_2)\boldsymbol{1}_{M_2}\nonumber\\
&=& \lambda\boldsymbol{1}_{M_2}+B_2^{\top}B_1^{-1}B_1\boldsymbol{1}_{M} - (B_2^{\top} B_1^{-1} B_2)\boldsymbol{1}_{M_2}\nonumber\\
&=& \lambda\boldsymbol{1}_{M_2}+B_2^{\top}B_1^{-1}( B_1\boldsymbol{1}_{M} - B_2\boldsymbol{1}_{M_2} )\nonumber\\
&>& \boldsymbol{0}_{M_2}
\end{eqnarray}

where the first equality is from a) of lemma \ref{basic1}; the last inequality is from b) of lemma \ref{basic1} and the fact that $B_2^{\top}B_1^{-1}\ge 0$ and each row of $B_2^{\top}B_1^{-1}$ has at least one strict positive entry.

Since $B_1,B_2,B_3 \ge0$, $B_3$ is a diagonal matrix, together with (\ref{vec ieq}), we know that the diagonal entries of $\lambda I_{M_2}+A_1 = \lambda I_{M_2}+ B_3 - B_2^{\top} B_1^{-1} B_2$ are positive and the off-diagonal entries are non-positive.
Let $E_{M_2}:= \boldsymbol{1}_{M_2}\boldsymbol{1}_{M_2}^{\top} - I_{M_2}$, then
$$
I_{M_2}\circ \left| A_1+\lambda I_{M_2} \right| = I_{M_2}\circ \left( A_1+\lambda I_{M_2} \right) , \ \
E_{M_2}\circ \left| A_1+\lambda I_{M_2} \right| = - E_{M_2}\circ\left( A_1+\lambda I_{M_2} \right),
$$
and
$$
\left( I_{M_2}\circ \left| A_1+\lambda I_{M_2} \right| \right)\boldsymbol{1}_{M_2} - \left( E_{M_2}\circ \left| A_1+\lambda I_{M_2} \right| \right)\boldsymbol{1}_{M_2} = (\lambda I_{M_2} +A_1)\boldsymbol{1}_{M_2} > \boldsymbol{0}_{M_2}
$$
This means $\lambda I_{M_2}+A_1$ is strictly diagonal dominant and thus nonsingular, which is a contradiction. Therefore, $A_1$ is positive definite. Take $\lambda=0$ in the preceding analysis, we know $A_1$ is strictly diagonal dominant.

\textbf{b)} It is easy to verify that $ A_2 = (\boldsymbol{1}_N\boldsymbol{1}_N^{\top})\otimes (diag(\boldsymbol{y})- \frac{1}{c}\boldsymbol{y}\boldsymbol{y}^{\top} ) $.
In view of the definition of $c$, we have $c >\boldsymbol{1}_{m-1}^{\top}\boldsymbol{y} $. Thus, the second claim of b) is a special case of a) with $ B_1 = c $, $ B_2 = \boldsymbol{y}^{\top}$ and $ B_3 = diag(\boldsymbol{y})$.

\end{proof}

\section{Analysis of algorithm \ref{Low-rank regularization method}}
In this section, we prove that through the steps in Algorithm \ref{Low-rank regularization method}, we get the accurate solution of the system $ (A_1+A_2)\boldsymbol{x} = \boldsymbol{g} $. We need a basic lemma on the inverse matrix on the sum of tow matrices.

\begin{lemma}\label{low rank inv}
Let $A\in \R^{n\times n}$ be an nonsingular matrix and $B\in \R^{n\times d}$, where $n$ and $d$ are two positive integers. Then
$$
(A+BB^{\top})^{-1} = A^{-1} - A^{-1}B (I_n + B^{\top}A^{-1}B)^{-1}B^TA^{-1}
$$
\end{lemma}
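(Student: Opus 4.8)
The plan is to recognize this as the Sherman--Morrison--Woodbury formula for a rank-$d$ update, and to prove it by direct verification rather than by any clever manipulation. Before anything else I would record a small bookkeeping point: since $B^{\top}A^{-1}B \in \R^{d\times d}$, the identity matrix inside the inverse on the right-hand side is $I_d$, and for that inverse to be defined one needs $I_d + B^{\top}A^{-1}B$ to be nonsingular. In every place we invoke this lemma, $A$ is a symmetric positive definite matrix (it is one of the diagonal blocks $A_{ii}$, or $Y^{-1}+\sum_i A_{ii}^{-1}$, which are positive definite by Lemma \ref{basic2}), so $B^{\top}A^{-1}B$ is positive semidefinite and $I_d + B^{\top}A^{-1}B$ is positive definite, hence invertible; I would state this as a standing assumption.

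The core step is to set $M := A^{-1} - A^{-1}B(I_d + B^{\top}A^{-1}B)^{-1}B^{\top}A^{-1}$ and compute $(A + BB^{\top})M$ directly. Two sub-identities do all the work: $(A+BB^{\top})A^{-1} = I_n + BB^{\top}A^{-1}$ and $(A+BB^{\top})A^{-1}B = B + BB^{\top}A^{-1}B = B\,(I_d + B^{\top}A^{-1}B)$. Substituting these, the factor $(I_d + B^{\top}A^{-1}B)$ cancels against its inverse, and one is left with $(A+BB^{\top})M = I_n + BB^{\top}A^{-1} - BB^{\top}A^{-1} = I_n$. Since $A + BB^{\top}$ is a square $n\times n$ matrix, a right inverse is automatically its (unique, two-sided) inverse, so $M = (A+BB^{\top})^{-1}$, which is the claim; for completeness one can also verify $M(A+BB^{\top}) = I_n$ by the mirror-image computation.

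There is essentially no hard part here: the statement is a routine algebraic identity, and the only things requiring care are the dimension of the inner identity matrix and the existence of all the inverses written down. The latter is covered in our setting by the positive definiteness noted above; for the fully general statement it follows from the block-determinant identity $\det(A)\,\det(I_d + B^{\top}A^{-1}B) = \det(A + BB^{\top})$ --- obtained by computing the Schur complement of the $(n+d)\times(n+d)$ block matrix with blocks $A$, $-B$, $B^{\top}$, $I_d$ in two different ways --- which shows that invertibility of $A + BB^{\top}$ and of $I_d + B^{\top}A^{-1}B$ are equivalent, so the right-hand side is well defined precisely when the left-hand side is.
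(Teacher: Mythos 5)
Your proof is correct. The paper itself states this lemma without proof (it is introduced as ``a basic lemma'' and immediately applied), so there is no argument in the paper to compare against; your direct verification --- defining $M$ as the right-hand side, using $(A+BB^{\top})A^{-1}B = B(I_d + B^{\top}A^{-1}B)$ to cancel the inner inverse, and concluding from the fact that a right inverse of a square matrix is the two-sided inverse --- is the standard and entirely adequate route. You also correctly flag that the $I_n$ appearing inside the inverse in the paper's statement is a typo for $I_d$ (the matrix $B^{\top}A^{-1}B$ is $d\times d$), and your observation that in every application of the lemma $A$ is positive definite, so that $I_d + B^{\top}A^{-1}B$ is automatically invertible, supplies a hypothesis the paper's statement silently omits. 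The determinant identity $\det(A)\det(I_d + B^{\top}A^{-1}B) = \det(A+BB^{\top})$ you give for the general case is a nice bonus but not needed for the paper's use of the lemma.
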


Recall that we have proved in lemma \ref{basic2} that $Y$ is positive definite. Suppose $ Y = R^{\top}R ,R\in \R^{(m-1)\times(m-1)}$ and let $ \tilde R = \boldsymbol{1}_N\otimes R^{\top} $. Then,$ A_2 = \tilde R \tilde R^{\top}$. Further more, let
$$
\bar{R}=
\begin{bmatrix}
0 \\  \vdots \\ 0 \\ 1
\end{bmatrix}_{N\times 1}
\otimes R^{\top},\ and \ \
U =
\begin{bmatrix}
I_{m-1} & & & I_{m-1}\\
       & I_{m-1} & & \vdots \\
       &        & \ddots & I_{m-1} \\
       & & & I_{m-1}
\end{bmatrix}_{M_2\times M_2}
$$
Note that $U$ is the same as defined in the main text part above Algorithm \ref{Low-rank regularization method}. It is easy to verify that $\tilde{R} = U\bar{R}$ and with the help of lemma \ref{low rank inv}, we have
\begin{eqnarray*}
  \left(A_1+A_2 \right)^{-1}
  &=& \left( A_1 + \tilde{R} \tilde{R}^{\top} \right)^{-1}  \\
   &=& A_1^{-1} - A_1^{-1}\tilde{R}(I + \tilde{R}^{\top} A_1^{-1} \tilde{R})^{-1} \tilde{R}^{\top} A_1^{-1} \\
   &=& A_1^{-1} - A_1^{-1}U\bar{R}(I + \tilde{R}^{\top} A_1^{-1} \tilde{R})^{-1} \bar{R}^{\top} U^{\top}A_1^{-1}. \\
\end{eqnarray*}

Define

\begin{equation}\label{W}
W := \bar{R}(I + \tilde{R}^{\top} A_1^{-1} \tilde{R})^{-1} \bar{R}^{\top} =
\begin{bmatrix} 0 \\  \vdots \\ 0 \\ R^{\top} \end{bmatrix}
(I_{m-1}+ \sum_{i=1}^N  R A_{ii} R^{\top})^{-1}
\begin{bmatrix} 0 & \cdots & 0 & R \end{bmatrix}
\end{equation}

 then
 \begin{equation}\label{inv}
\left(A_1 + A_2 \right)^{-1} = A_1^{-1} - A_1^{-1}U WU^{\top}A_1^{-1}
\end{equation}

From (\ref{W}), it is clear that all entries of $W$ are zero, except for the last $(m-1)\times (m-1)$ block $W_{NN} = R^{\top} (I_{m-1}+ \sum_{i=1}^N R A_{ii} R^{\top})^{-1} R$. With further calculation,
$$
W_{NN} = \big(R^{-1}R^{-\top} + \sum_{i=1}^{N}A_{ii}^{-1} \big)^{-1} = \big(Y^{-1} + \sum_{i=1}^{N}A_{ii}^{-1} \big)^{-1}.
$$

To solve the system $ (A_1+A_2)\boldsymbol{x} = \boldsymbol{g} $ with the equation (\ref{inv}), we just need to let each term in (\ref{inv}) act on the vector step by step. That's exactly what Algorithm \ref{Low-rank regularization method} does.

\section{Proof of theorem \ref{cost}}
In this section, we present the detailed analysis of computational cost and memory usage of SLRM and DLRM. Here we restate theorem \ref{cost}.
\begin{theorem}\label{cost-appendix}
1). For SLRM, Algorithm \ref{Normal equation solver}, the time complexity in terms of flops is $O(m^2 \sum_{i=1}^N m_i + Nm^3)$, and the memory usage in terms of doubles is $O(m\sum_{i=1}^N m_i + Nm^2)$.
2). For the DLRM, the time complexity in terms of flops is $O( m \sum_{i=1}^N m_i^2 + \sum_{i=1}^N m_i^3)$, and the memory usage in terms of doubles is $O(m\sum_{i=1}^N m_i + \sum_{i=1}^N m_i^2)$.
\end{theorem}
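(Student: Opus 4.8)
The plan is to prove Theorem~\ref{cost} by a direct, step-by-step accounting of the flop counts and of the number of stored doubles in Algorithm~\ref{Normal equation solver} (for SLRM, which internally invokes Algorithm~\ref{Low-rank regularization method}) and in its DLRM variant, leaning on the structure already established. The organizing observation is that, by Lemma~\ref{structure1} together with the triangular change of variables $V_1,V_2$, solving $(\bar AD\bar A^\top)\boldsymbol z=\boldsymbol f$ reduces to three ingredients: (i) matrix--vector products with the sparse factors $V_1,V_2$ and their transposes, which differ from the identity only by the block-diagonal matrix $T=B_2^\top B_1^{-1}$ and by $\boldsymbol\alpha/c$; (ii) two trivial solves, with the diagonal $B_1$ and with the scalar $c$; and (iii) one solve with $A_1+A_2\in\R^{N(m-1)\times N(m-1)}$. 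Since $B_1,B_3$ are diagonal and the $i$-th block of $B_2$ carries only $O(m m_i)$ nonzeros, I would first check that forming $B_1,B_2,B_3,\boldsymbol y,c,T$ and applying $V_1,V_2$ (and their transposes) all cost $O(m\sum_i m_i)$ flops and $O(m\sum_i m_i)$ storage; the whole theorem then reduces to bounding step~7 of Algorithm~\ref{Normal equation solver}, i.e.\ Algorithm~\ref{Low-rank regularization method}.

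For SLRM I would bound Algorithm~\ref{Low-rank regularization method} line by line. Forming each block $A_{ii}=B_{3i}-B_{2i}^\top B_{1i}^{-1}B_{2i}$ amounts to one product of an $m_i\times(m-1)$ matrix with an $(m-1)\times m_i$ matrix (the $B_{1i}^{-1}$ being a diagonal scaling), i.e.\ $O((m-1)^2 m_i)$, hence $O(m^2\sum_i m_i)$ over all $i$; inverting the $N$ dense $A_{ii}\in\R^{(m-1)\times(m-1)}$ then costs $O(Nm^3)$ and is the bottleneck. By Lemma~\ref{basic2}, $A_2=(\boldsymbol 1_N\boldsymbol 1_N^\top)\otimes Y$, so multiplication by $U,U^\top$ touches only $O(Nm)$ nonzeros, the products $\boldsymbol x^{(1)}=A_1^{-1}\boldsymbol g$ and $\boldsymbol x^{(5)}=A_1^{-1}\boldsymbol x^{(4)}$ cost $O(Nm^2)$, and step~5 forms $Y^{-1}+\sum_i A_{ii}^{-1}$ (an $O(Nm^2)$ summation, with $Y^{-1}$ via Sherman--Morrison) and solves one $(m-1)\times(m-1)$ system in $O(m^3)$; all of these are absorbed into $O(m^2\sum_i m_i+Nm^3)$. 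For memory, the dominant objects are the $N$ dense blocks $A_{ii}$ (or $A_{ii}^{-1}$), $O(Nm^2)$ doubles, together with the diagonal/block-diagonal data $B_1,B_2,B_3$ and the input $\boldsymbol d\in\R^{n_{col}}$, $O(m\sum_i m_i)$ doubles; this gives $O(m\sum_i m_i+Nm^2)$.

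For DLRM the only change is in how $A_{ii}^{-1}$ is obtained and applied: since $m\gg m_i$, identity~\eqref{Aii-lowrank-reg} writes $A_{ii}^{-1}$ as the diagonal $B_{3i}^{-1}$ plus a rank-$m_i$ correction assembled from the $m_i\times m_i$ matrix $(B_{1i}-B_{2i}B_{3i}^{-1}B_{2i}^\top)^{-1}$. Forming and inverting that small matrix costs $O(m m_i^2+m_i^3)$ per block, hence $O(m\sum_i m_i^2+\sum_i m_i^3)$; crucially, neither $A_{ii}$ nor $A_{ii}^{-1}$ is ever instantiated densely---every occurrence of $A_{ii}^{-1}\boldsymbol v$ in Algorithm~\ref{Low-rank regularization method} is carried out by successive matrix--vector products through the factors of \eqref{Aii-lowrank-reg} at cost $O(m m_i)$, and in step~5 the matrix $Y^{-1}+\sum_i A_{ii}^{-1}$ is kept in its diagonal-plus-low-rank form (of rank $1+\sum_i m_i$) so that the $(m-1)$-dimensional solve there is executed by one further Woodbury update via Lemma~\ref{low rank inv} instead of a dense factorization. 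Since each retained block $(B_{1i}-B_{2i}B_{3i}^{-1}B_{2i}^\top)^{-1}$ takes $O(m_i^2)$ doubles, the memory is $O(m\sum_i m_i+\sum_i m_i^2)$.

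\textbf{Main obstacle.} The mathematical content is light; the work is in the bookkeeping, and the one genuinely delicate point is checking that in the DLRM variant no step is permitted to collapse a structured object back into a dense $(m-1)\times(m-1)$ matrix---most sharply step~5 of Algorithm~\ref{Low-rank regularization method}, where a naive implementation would reintroduce an $O(m^3)$ or $O(m^2\sum_i m_i)$ term and ruin the bound. Pinning down that all the transformation factors ($V_1,V_2,U$, and the Kronecker factors of $A_2$ and $B_4$) act implicitly, and tracking the rank of every low-rank update that feeds step~5, is where I expect to spend the effort; the remaining estimates are routine.
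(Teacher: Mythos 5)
Your accounting for SLRM is correct and is essentially the paper's own proof: the appendix proof of Theorem~\ref{cost} is exactly an itemized flop count over the quantities in Algorithm~\ref{Normal equation solver} (with $A_1$ costing $O(m^2\sum_i m_i)$ and step~7 costing $O(Nm^3)$ as the two dominant terms) together with the observation that the stored objects are the $O(m\sum_i m_i)$ nonzeros of $\bar A$ and the block-diagonal data plus the $N(m-1)^2$ entries of the blocks of $A_1$. That half needs no further comment.

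For DLRM you have correctly located the one step the paper's proof does not really address---the paper disposes of everything outside step~1 with the single sentence that ``all other matrix-vector operations are cheap compared with matrix multiplication and inverse''---but your proposed repair of step~5 of Algorithm~\ref{Low-rank regularization method} does not close the gap. You suggest keeping $Y^{-1}+\sum_i A_{ii}^{-1}$ as a diagonal plus a rank-$\bigl(1+\sum_i m_i\bigr)$ correction and solving by one application of Lemma~\ref{low rank inv}. But the Woodbury identity then requires forming and inverting the inner capacitance matrix of dimension $1+\sum_i m_i$; writing $A_{ii}^{-1}=B_{3i}^{-1}+C_i^{\top}M_iC_i$ with $C_i=B_{2i}B_{3i}^{-1}\in\R^{m_i\times(m-1)}$, the inner matrix has dense off-diagonal blocks $C_iD^{-1}C_j^{\top}$ for $i\neq j$, so assembling it costs $O\bigl(m(\sum_i m_i)^2\bigr)$ and factoring it costs $O\bigl((\sum_i m_i)^3\bigr)$, with $O\bigl((\sum_i m_i)^2\bigr)$ storage. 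Since $(\sum_i m_i)^3$ exceeds $\sum_i m_i^3$ by a factor of order $N^2$ (and $(\sum_i m_i)^2$ exceeds $\sum_i m_i^2$ by a factor of order $N$), this route does not yield the claimed $O(m\sum_i m_i^2+\sum_i m_i^3)$ flops and $O(m\sum_i m_i+\sum_i m_i^2)$ memory. The alternatives you implicitly reject are no better under your own analysis (densifying $\sum_i A_{ii}^{-1}$ costs $O(m^2\sum_i m_i)$ and a dense solve costs $O(m^3)$), so as written your argument establishes every step of part~2 except the one you flag as delicate; you either need a genuinely different treatment of step~5 or you should note that the stated bound only accounts for the cost of representing and applying the $A_{ii}^{-1}$, which is all the paper's own three-sentence proof actually verifies.
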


\begin{proof}
(1) First, for SLRM, assuming taking full advantage of the sparse structure, we count the flops required for computing each of the following quantities in Algorithm \ref{Normal equation solver}:
\begin{equation*}
    B_1: O(m \sum_{t=1}^N m_t); \
    B_2: 0; \
    B_3: O(m \sum_{t=1}^N m_t); \
    T: O(m \sum_{t=1}^N m_t); \
    A_1: O(m^2 \sum_{t=1}^N m_t); \
    A_2: O(m^2);
\end{equation*}
\begin{equation*}
    \boldsymbol{z}^{(1)}: O(m \sum_{t=1}^N m_t); \
    \boldsymbol{z}^{(2)}: O(Nm); \
    \boldsymbol{z}^{(3)}: O( Nm^3); \
    \boldsymbol{z}^{(4)}: O(Nm); \
    \boldsymbol{z}^{(5)}: O(m \sum_{t=1}^N m_t).
\end{equation*}
The computation of $A_1$ and $\boldsymbol{z}^{(3)}$ requires most flops. The total flops required for SLRM is $O(m^2 \sum_{t=1}^N m_t + Nm^3)$.

On the other hand, for implementation of the whole interior-point methods, the major data that should be kept in the memory include:

(a) Several vectors that is at the same level as a primal variable or a dual variable. Note that the scale of a primal variable is $ m(\sum_{i=1}^N m_i) + m  $ flops, and the scale of a dual variable is  $ \sum_{i=1}^N m_i + N(m-1) +1 $ flops.

(b) Matrix $\bar A$ which is defined in lemma \ref{precondition-lemma}. Recall that
$$
\bar A =
  \begin{bmatrix}
F_1                              &                                 &                &                  &   \\
                                 &F_2                              &                &                   &   \\
                                 &                                  &\ddots          &                  &  \\
                                 &                                  &                & F_N &  \\
            G_1^{(2)} &                     &               &                   & H^{(2)} \\
                                 &G_2^{(2)}                                &                &                  & H^{(2)} \\
                                 &                                   &\ddots           &                &  \vdots\\
                                 &                                   &              & G_N^{(2)}        & H^{(2)} \\
                                 &                                   &                 &                      &  \boldsymbol{1}_m^{\top} \\
\end{bmatrix}
$$
Since each column of $F_i$ and each column of $G^{(i)}$ has at most one non-zero element, the total number of non-zero elements in $F_1,\dots, F_N$ and $ G_1^{(2)} ,\dots, G_N^{(2)} $ is bounded by $2m \sum_{i=1}^N m_i$. In addition, $H^{(2)}$ has $m-1$ non-zero elements, so the total number of non-zero elements in $\bar A$ is bounded by $2m \sum_{i=1}^N m_i + N(m-1) + m$

(c) Diagonals of matrices $B_1$ and $B_3$, and diagonal blocks of matrices $B_2$ and $A_1$. The data scale of the diagonals of matrices $B_1$ and $B_3$ are even smaller than a dual variable. The diagonal blocks of matrices $B_2$ and $ A_1$ have $m\sum_{i=1}^N m_i$ elements and $N(m-1)^2$ elements, respectively.

(d) Other intermediate vectors or matrices, whose data scale is bounded by a constant time of the data scale in (a), (b) and (c).

With the analysis in (a) to (d), we know the memory usage of SLRM is bounded by $O(m\sum_{i=1}^N m_i + Nm^2)$.

(2) The major difference of DLRM and SLRM is that, we don't need to formulate the diagonal blocks $\{A_{ii}: i=1\dots,N\}$ of matrix $A_1$ explicitly and compute the inverses of $A_{ii}s$. Instead, we need to compute $(B_{1i} - B_{2i}B_{3i}^{-1}B_{2i}^{\top})^{-1}$ explicitly, which requires $O(m \sum_{i=1}^N m_i^2)$ flops for matrix multiplication and $O( \sum_{i=1}^N m_i^3)$ flops for matrix inverse. Since all other matrix-vector operations are cheap compared with matrix multiplication and inverse, as a result, the leading cost of the computation time is at the level $O(m \sum_{i=1}^N m_i^2 + \sum_{i=1}^N m_i^3)$.

Further more, since we need to keep $(B_{1i} - B_{2i}B_{3i}^{-1}B_{2i}^{\top})^{-1}, i=1,\dots,N$ in memory instead of $A_{ii}^{-1}$, with simply different analysis as in part(1), we know the memory usage of DLRM is at the level $O(m\sum_{i=1}^N m_i + \sum_{i=1}^N m_i^2)$.

\end{proof}

\section{Examples of local minima and saddle points in free support cases}

\textbf{An example of local minima}

Set $\Pi^{(t)} = \big[{\boldsymbol{\pi}^{(t)}_1}^{\top},{\boldsymbol{\pi}_2^{(t)}}^{\top},\ldots,{\boldsymbol{\pi}_m^{(t)}}^{\top}\big]^{\top}$.  Let $N$ be any positive integer and $m=2$, $d=1$, $m_t = 3$, $Q^{(t)} = [0,0.9,1.1]$ and $\boldsymbol{a}^t = [0.01,0.495,0.495]$. Then $X = [0,1]$, $\boldsymbol{w} = (0.01,0.99)$ and $\boldsymbol{\pi}^{(t)}_1 = (  0.01 , 0 , 0 ) $ and $\boldsymbol{\pi}^{(t)}_2  = ( 0 , 0.495 , 0.495)$ is a local minimum.  But it is not a global minimum because  a lower objective value occurs  when $X = \{0.9,1.1\}$, $\boldsymbol{w} = (0.505,0.495)$, $\boldsymbol{\pi}^{(t)}_1 = (  0.01 , 0.495 , 0 ) $ and $\boldsymbol{\pi}^{(t)}_2  = ( 0 , 0 , 0.495)$.

\textbf{An example of saddle point}

Let $N$ be any positive integer and  $m=2$, $d=1$, $m_t = 3$, $Q^{(t)} = [0,1/2,3/2]$ and $\boldsymbol{a}^t = [1/3,1/3,1/3]$, then $X = [0,1]$, $\boldsymbol{w} = (1/3,2/3)$, $\boldsymbol{\pi}^{(t)}_1 = (  1/3 , 0 , 0 ) $ and $\boldsymbol{\pi}^{(t)}_2  = ( 0 , 1/3 , 1/3)$ is a saddle point. Fixing $X$, the $\boldsymbol{w}$ and $\Pi^{(t)}$ is an optimal basic solution of problem \ref{fix}. Fixing $\boldsymbol{w}$ and $\Pi^{(t)}$, $X$ is the solution of \eqref{optimalX}. It is not a local minimum, because  a lower objective value of problem \ref{fix} can occur when $X = \{\delta,1\}$, $\forall ~ \delta \in (0,1/2)$.

\section{Details of MAAIPM}

Figure \ref{fig:MAAIPM} visualizes the primal variables $x_i$ and objective gradients $c_i$ in each iteration of MAAIPM.

\begin{figure}[htbp]
    \centering
    \includegraphics[width=0.7\textwidth]{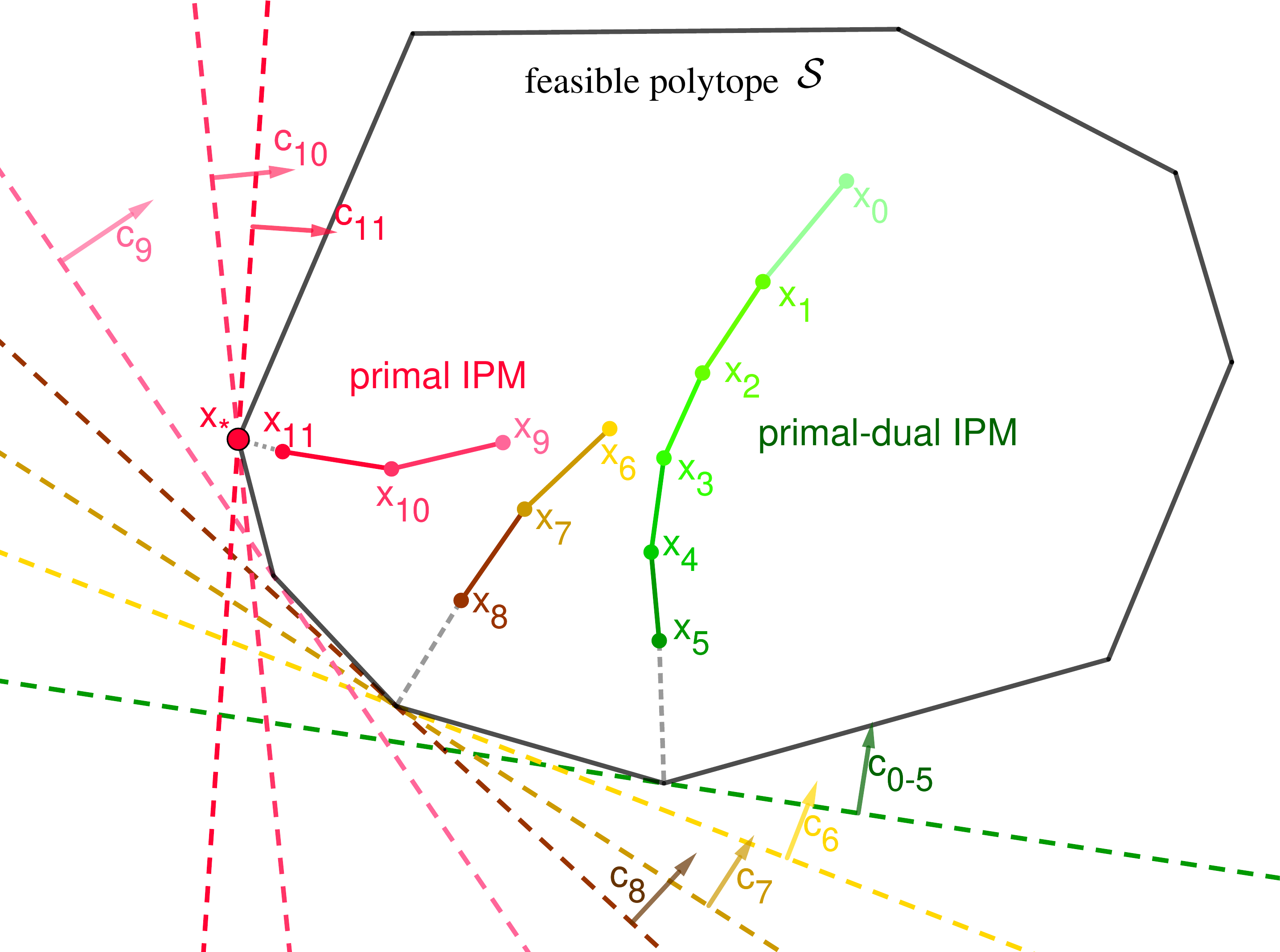}

    \caption{The primal variables and objective gradients in different iterations of MAAIPM. $x_i$ is returned by each iteration of IPM under the objective gradient $c_i$, and $c_i, ~i = 5,\dots,11,$ is calculated by $x_{i-1}$ according to \eqref{re-nonfix}. At the beginning, MAAIPM updates objective gradient after every a few primal-dual IPM iterations(green). Then MAAIPM applies primal IPM(yellow and red) to frequently update objective gradient $c$ and uses "jump" tricks to escape local minima. $x_6$ and $x_9$ are the first primal variables returned by one primal IPM iteration form a smartly chosen starting point.}
    \label{fig:MAAIPM}
\end{figure}

\begin{algorithm}[H]
	\caption{Matrix-based Adaptive Alternating Interior-point Method(MAAIPM)}

    \KwIn{an initial $X^0$}

     \If{support points are pre-specfied}{
        implement predictor-corrector IPM\;
         \textbf{Output} $\boldsymbol{w}^*,\{\Pi^{(t),*}\}$
    }\Comment{Pre-specified support cases}

    \While{at the beginning}{
          predictor-corrector IPM to solve \eqref{fix} and update $X^*$\;
    }\Comment{Update support $X^*$ every a few IPM iterations}

     \While{ a termination criterion is not met}{

     s = 0, apply the warm-start strategy to smartly choose the starting point\;

     \While{ the penalty $\mu^s$ is not sufficiently close to $0$}{

     calculate the Newton direction $p^s$ at $(\boldsymbol{w}^s,\{\Pi^{(t),s}\})$ by \eqref{Newton direction};

     $(\boldsymbol{w}^{s+1},\{\Pi^{(t),s+1}\}) = (\boldsymbol{w}^s,\{\Pi^{(t),s}\}) + \alpha^sp^s$, where $\alpha^s$ ensures the interior point\;

     update $X^*$ by (\ref{optimalX}) and choose penalty $\mu^{s+1} < \mu^s$;

     \Comment{Update support $X^*$ every IPM iteration}

     $s= s+1$;

     }

     }\Comment{"Jump" tricks}

     \KwOut{ $\boldsymbol{w}^s,X^*,\{\Pi^{(t),s}\}$}
\end{algorithm}

\end{document}